\DeclareMathOperator{\diam}{diam}
\newtheorem{theorem}{Theorem}[section]
\newtheorem{lemma}[theorem]{Lemma}
\newtheorem{definition}[theorem]{Definition}
\newtheorem{remark}[theorem]{Remark}
\numberwithin{equation}{section}
\def\XXint#1#2#3{{\setbox0=\hbox{$#1{#2#3}{\int}$}
 \vcenter{\hbox{$#2#3$}}\kern-.5\wd0}}
\title{Musielak-Orlicz-Sobolev embeddings: Necessary and Sufficient Conditions}
\author{Ankur Pandey and Nijjwal Karak}
\thanks{Both the authors gratefully acknowledge support from DST-SERB (SRG/2021/000118). The first author also acknowledges the Council of Scientific and Industrial Research (CSIR) for awarding a Junior Research Fellowship (File Number: 09/1026(17526)/2024-EMR-I)}
\begin{document}
\maketitle
\begin{abstract}
 In this paper we study the necessary and sufficient conditions on domain for  Musielak-Orlicz-Sobolev embedding of the space $W^{1,\Phi(\cdot,\cdot)}(\Omega),$ where $\Phi(x,t):=t^{p(x)}{(\log(e+t))}^{q(x)}$.
 \end{abstract}

\indent Keywords: Orlicz spaces, Orlicz-Sobolev spaces, Musielak-Orlicz spaces, Musielak-Orlicz-Sobolev spaces, Variable exponent Sobolev spaces.\\
\indent 2020 Mathematics Subject Classification: 46E35, 46E30.
\section{Introduction}
We assume throughout the paper that $\Omega$ is an open subset of $\mathbb{R}^n$ and the variable exponents p and q are continuous functions defined on $\Omega$ or $\mathbb R^n$, satisfying 
\begin{enumerate}
    \item [(p1)] $1\leq p^-:= inf_{x\in\mathbb R^n} p(x)\leq sup_{x\in\mathbb R^n} p(x)=:p^+<\infty$
    
    \item[(q1)] $-\infty<q^-:=inf_{x\in\mathbb R^n} q(x)\leq sup_{x\in\mathbb R^n} q(x)=:q^+<\infty$.
\end{enumerate}  
The following two conditions on $p$ and $q$ will also be used which, in literature, are known as the log-H\"older continuous and the log-log-H\"older continuous respectively:
\begin{enumerate}
\item[(p2)] $|p(x)-p(y)|\leq \frac{C}{\log(e+1/|x-y|)}$ whenever $x\in\mathbb R^n$ and $y\in\mathbb R^n$
\item[(q2)] $|q(x)-q(y)|\leq \frac{C}{\log(e+\log(e+1/|x-y|))}$ whenever $x\in\mathbb R^n$ and $y\in\mathbb R^n $.
\end{enumerate}
For the variable exponent Sobolev space $W^{1,p(\cdot)}(\Omega),$ the Sobolev-type (continuous) embedding $W^{1,p(\cdot)}\hookrightarrow L^{p^*(\cdot)}(\Omega)$ was established in \cite{Die04} for bounded domains with locally Lipschitz boundary, with the condition $(p2)$ on the exponent $p.$ 

For Musielak-Orlicz-Sobolev spaces, Sobolev-type embedding have been studied in \cite{CD, Fan12, HMOS10, MOS18}. In this paper, we concentrate on the class of functions $\Phi(x,t):=t^{p(x)}{(\log(e+t))}^{q(x)}$. For this class of functions $\Phi(x,t)$ the following embedding was established in \cite{HMOS10} for the space $W^{1,\Phi(\cdot,\cdot)}_0 (\Omega)$.

\begin{theorem}[\cite{HMOS10}]\label{embedding theorem in Rn}
Let p satisfy (p1), (p2) and q satisfy (q1), (q2). If $p^+<n$ then for every $u\in W^{1,\Phi(\cdot,\cdot)}_0 (\Omega),$
\begin{center}
       $||u||_{L^{\Psi(\cdot,\cdot)}(\Omega)}\leq C|| u||_{W^{1,\Phi(\cdot,\cdot)}(\Omega)},$
\end{center}
where $\Phi(x,t):=t^{p(x)}{(\log(e+t))}^{q(x)}$ and $\Psi(x,t):=t^{p^*(x)}{(\log(e+t))}^{q(x)p^*(x)/p(x)}.$ Here $p^*(x)$ denotes the Sobolev conjugate of $p(x),$ that is  $1/p^*(x) =1/p(x)- 1/n$.
\end{theorem}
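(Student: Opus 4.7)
The plan is to reduce the claim to a Riesz-potential mapping property combined with a Hedberg-type interpolation argument. Since $u\in W^{1,\Phi(\cdot,\cdot)}_0(\Omega)$, I would first extend $u$ by zero outside $\Omega$ so that one may work on all of $\mathbb R^n$ (where $p,q,\Phi$ are already defined). Then the classical pointwise estimate
\[
|u(x)|\leq C\, I_1(|\nabla u|)(x),\qquad I_1 f(x)=\int_{\mathbb R^n}\frac{f(y)}{|x-y|^{n-1}}\,dy,
\]
holds almost everywhere, so it suffices to establish
\[
\|I_1 f\|_{L^{\Psi(\cdot,\cdot)}(\mathbb R^n)}\leq C\,\|f\|_{L^{\Phi(\cdot,\cdot)}(\mathbb R^n)}
\]
and apply it with $f=|\nabla u|$.

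For the Riesz-potential estimate I would use Hedberg's splitting: for a parameter $r>0$,
\[
I_1 f(x)=\int_{|y-x|<r}\frac{f(y)}{|x-y|^{n-1}}\,dy+\int_{|y-x|\geq r}\frac{f(y)}{|x-y|^{n-1}}\,dy,
\]
where the local part is bounded by $C\, r\,Mf(x)$ with $M$ the Hardy-Littlewood maximal operator, and the tail is controlled via the Musielak-Orlicz H\"older inequality against the complementary function $\widetilde\Phi(y,\cdot)$. The H\"older bound produces a factor of the form $r^{1-n/p(x)}(\log(e+1/r))^{-q(x)/p(x)}\|f\|_{L^{\Phi(\cdot,\cdot)}}$, once one uses (p2) and (q2) to freeze $p(y),q(y)$ to $p(x),q(x)$ inside the integral over $\{|x-y|\geq r\}$. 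Optimizing the free parameter $r$ then yields a pointwise inequality of Hedberg type,
\[
|u(x)|\leq C\,(Mf(x))^{p(x)/p^*(x)}\,\bigl(\|f\|_{L^{\Phi(\cdot,\cdot)}}\bigr)^{1-p(x)/p^*(x)}\cdot\text{(log-correction)},
\]
where the logarithmic correction carries precisely the exponent $q(x)p^*(x)/p(x)$ that appears in $\Psi$.

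To turn this pointwise bound into the stated norm inequality, I would next invoke the boundedness of the maximal operator on $L^{\Phi(\cdot,\cdot)}(\mathbb R^n)$ under hypotheses (p1)-(p2) and (q1)-(q2), which is available in the variable exponent and Musielak-Orlicz literature (Diening's argument adapted to the logarithmic perturbation). Substituting the pointwise Hedberg bound into $\Psi(x,|u(x)|)$, integrating, and applying the maximal function bound gives, after normalization so that $\|f\|_{L^{\Phi(\cdot,\cdot)}}=1$, an estimate of the form $\int_{\mathbb R^n}\Psi(x,|u(x)|)\,dx\leq C$, which translates by the unit-ball property of Luxemburg norms into the claimed embedding.

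The main obstacle I expect is the careful bookkeeping of the logarithmic factor: one has to verify that the weight $(\log(e+t))^{q(x)}$, when pushed through the Hedberg splitting and the dual pairing with $\widetilde\Phi$, emerges on the target side with exactly the exponent $q(x)p^*(x)/p(x)$. This is exactly where the log-log-H\"older condition (q2) is used, since it is quantitatively tuned to absorb the oscillation of $q(\cdot)$ across balls on the natural scale $r\sim(Mf/\|f\|)^{-p(x)/n}$ that appears after optimization; any weaker regularity of $q$ would leave an unbounded logarithmic defect and destroy the target exponent.
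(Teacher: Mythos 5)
First, a point of comparison: the paper itself offers no proof of this statement. Theorem \ref{embedding theorem in Rn} is quoted verbatim from \cite{HMOS10} and used later as a black box, so the only meaningful comparison is with the proof in that cited source, which indeed follows the route you outline (pointwise domination of $|u|$ by $I_1|\nabla u|$, Hedberg's splitting, freezing of the exponents via (p2) and (q2), optimization in $r$, and a modular estimate at the end). In that sense your sketch is the standard one and is aimed in the right direction.

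There is, however, one genuine gap in the chain as you have written it: the appeal to ``the boundedness of the maximal operator on $L^{\Phi(\cdot,\cdot)}(\mathbb{R}^n)$ under hypotheses (p1)--(p2) and (q1)--(q2).'' Hypothesis (p1) allows $p^-=1$ and (q1) allows $q\equiv 0$, in which case $\Phi(x,t)=t$ on part of the space and the Hardy--Littlewood maximal operator is not bounded there, since $M$ is unbounded on $L^1$. The theorem is nevertheless asserted for $p^-\geq 1$, and the borderline $p(x)=1$ is precisely the delicate case (classically $W^{1,1}\hookrightarrow L^{n/(n-1)}$ is true but cannot be reached through $M$). To close this you must either restrict to $p^->1$ (which weakens the theorem), impose a compensating positivity of $q$ where $p=1$ so that an $L\log L$-type effect restores the boundedness of $M$ on $L^{\Phi(\cdot,\cdot)}$, or replace the maximal function in the local part of Hedberg's splitting by an averaging or mollification estimate that survives $p^-=1$, as is done in the variable-exponent literature. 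A smaller but real issue is the tail estimate: freezing $p(y),q(y)$ to $p(x),q(x)$ via (p2) and (q2) is only legitimate over a bounded range of $|x-y|$, since log-H\"older continuity controls oscillation locally and not at infinity; for unbounded $\Omega$ one needs a decay condition on the exponents at infinity (as the paper's Theorem \ref{main embedding} in fact assumes) or must exploit that $u$ is compactly supported.
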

\noindent Here we establish the embedding for $W^{1,\Phi(\cdot,\cdot)} (\Omega)$ for bounded domains $\Omega$ with Lipschitz boundary. 
 \begin{theorem}\label{main embedding}
   Let $\Omega$ be an open and bounded set with Lipschitz boundary so that $\diam(\Omega)>0$. Suppose that the exponent $p:\Omega \rightarrow [1,\infty)$ satisfies both log-H\"older continuous and Nekvinda's decay condition  with $1\leq p^-\leq p^+<n$ and the exponent $q:\Omega \rightarrow(-\infty,\infty)$ is log-log-H\"older continuous with $p^+ + q^+ \geq 1$. Consider $\Phi(x,t):=t^{p(x)}(\log(e+t))^{q(x)}$ and $\Psi(x,t):=t^{p^*(x)}(\log(e+t))^{q(x)p^*(x)/p(x)},$ 
where  $1/p^*(x) =1/p(x)-1/n$. Then there exists a constant $C$ such that whenever $u\in W^{1,\Phi(\cdot,\cdot)}(\Omega),$
\begin{equation}\label{embedding}
    ||u||_{L^{\Psi(\cdot,\cdot)}(\Omega)}\leq C|| u||_{W^{1,\Phi(\cdot,\cdot)}(\Omega)}.
\end{equation}
\end{theorem}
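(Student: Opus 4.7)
My plan is to reduce the statement to Theorem~\ref{embedding theorem in Rn} by constructing a bounded linear extension operator $E\colon W^{1,\Phi(\cdot,\cdot)}(\Omega)\to W^{1,\Phi(\cdot,\cdot)}_0(\Omega')$, where $\Omega'$ is a bounded open set containing $\overline{\Omega}$. Once such an $E$ is available together with the estimate $\|Eu\|_{W^{1,\Phi(\cdot,\cdot)}(\Omega')}\le C\|u\|_{W^{1,\Phi(\cdot,\cdot)}(\Omega)}$, I would apply Theorem~\ref{embedding theorem in Rn} to $Eu$ on $\Omega'$, after extending $p$ and $q$ to $\mathbb R^n$ preserving their log-H\"older and log-log-H\"older moduli (a standard McShane-type construction), to obtain
\[
\|u\|_{L^{\Psi(\cdot,\cdot)}(\Omega)}\le\|Eu\|_{L^{\Psi(\cdot,\cdot)}(\Omega')}\le C\|Eu\|_{W^{1,\Phi(\cdot,\cdot)}(\Omega')}\le C'\|u\|_{W^{1,\Phi(\cdot,\cdot)}(\Omega)},
\]
which is \eqref{embedding}.

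\noindent\textbf{Construction of $E$.} Using compactness of $\partial\Omega$ and the Lipschitz hypothesis, I would cover $\overline{\Omega}$ by finitely many open sets $U_0,U_1,\ldots,U_N$ with $\overline{U_0}\subset\Omega$, each $U_i$ for $i\ge1$ carrying a bi-Lipschitz chart $\varphi_i$ that straightens $\partial\Omega\cap U_i$. Fixing a subordinate partition of unity $\{\eta_i\}$, I define $Eu$ piece-by-piece: keep $\eta_0 u$ inside $U_0$, and for $i\ge 1$ transport $\eta_i u$ to the half-space via $\varphi_i$, extend by reflection $\sigma(x',x_n)=(x',-x_n)$, and pull back. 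Multiplying the sum by a smooth cutoff that equals $1$ on $\Omega$ and is compactly supported in a bounded $\Omega'\supset\overline{\Omega}$ produces $Eu\in W^{1,\Phi(\cdot,\cdot)}_0(\Omega')$. Boundedness of $E$ will then reduce, via partition-of-unity and product-rule arguments, to a single local modular estimate on each straightened chart of the form
\[
\int_{\mathbb R^n_-}\Phi(\sigma(y),|\nabla u(\sigma(y))|)\,dy\le C\int_{\mathbb R^n_+}\Phi(y,|\nabla u(y)|)\,dy,
\]
together with an analogous estimate with $u$ in place of $\nabla u$.

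\noindent\textbf{The main obstacle.} The technical heart of the argument will be this modular comparison across the reflection: after change of variables it amounts to showing $\Phi(\sigma(y),t)\le C\,\Phi(y,t)$ in a suitable modular sense, and because $p$ and $q$ depend on position while $|\sigma(y)-y|$ can be arbitrarily small near $\partial\Omega$, I will trade the variation of the exponents against the size of $t$ using the log-H\"older condition (p2) and the log-log-H\"older condition (q2), in exact analogy with the variable-exponent theory. Nekvinda's decay is exactly what is needed to extend $p$ to $\mathbb R^n$ compatibly with the maximal-operator machinery underlying the boundedness of $E$ and the density of smooth functions in $W^{1,\Phi(\cdot,\cdot)}$, while $p^++q^+\ge 1$ guarantees that $\Phi(x,\cdot)$ is a genuine $N$-function, so that modular inequalities translate into norm inequalities. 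Handling the factor $(\log(e+t))^{q(x)}$ with $q$ possibly negative is precisely where the hypothesis (q2) is essential; once this modular comparison is in hand, the remainder is assembly and \eqref{embedding} follows.
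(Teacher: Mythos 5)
Your overall reduction --- extend $u$ across $\partial\Omega$, extend the exponents to $\mathbb R^n$, and invoke Theorem~\ref{embedding theorem in Rn} --- is the same skeleton as the paper's proof, but the two arguments diverge at the extension step, and your version of that step has a gap. The paper does not construct the extension operator by hand: it extends only the exponents $p$ and $q$ by reflection plus a McShane extension (Lemmas~\ref{extension of the exponents} and~\ref{p-extension}), verifies that the extended $\Phi_1$ satisfies the structural conditions $(A0)$, $(A1)$, $(A2)$ and $(Dec)_{p_1^++q_1^+}$ (Lemma~\ref{phi satisfies all the condition}, where Nekvinda's decay enters through $(A2)$ and $p^++q^+\ge 1$ through the decay index), and then quotes the abstract extension theorem of \cite{Juu} for $(\epsilon,\delta)$-domains (Theorem~\ref{Justi theorem}) together with $W^{1,\Phi_1}_0(\mathbb R^n)=W^{1,\Phi_1}(\mathbb R^n)$ from \cite{HH19}. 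The real content of the paper's proof is thus the verification of $(A0)$--$(A2)$, which your proposal does not engage with.

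The gap in your route sits exactly at the step you call the main obstacle. A pointwise or modular inequality $\Phi(\sigma(y),t)\le C\,\Phi(y,t)$ valid for \emph{all} $t>0$ is false when $p$ genuinely varies: log-H\"older continuity only yields $t^{p(\sigma(y))}\le C\,t^{p(y)}$ for $t$ bounded by a fixed power of $1/|\sigma(y)-y|$, and near $\partial\Omega$ this admissible range degenerates while $|\nabla u|$ may be arbitrarily large there. ``Trading the variation of the exponents against the size of $t$'' therefore does not close the estimate; one needs either the full maximal-function machinery of Diening, or --- the standard and much simpler fix, used in \cite{Die04,ER} and implicit in Lemma~\ref{extension of the exponents} --- to extend the exponents $p$ and $q$ by the \emph{same} reflection used to extend $u$, so that after the bi-Lipschitz change of variables the two modular integrands coincide identically and no comparison inequality is required. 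With that correction your construction becomes the classical one and should work, though you would still owe (i) a proof that the cut-off $Eu$ actually lies in $W^{1,\Phi}_0(\Omega')$ (membership of a compactly supported Sobolev function in the $C_c^\infty$-closure requires a mollification argument, which in the Musielak--Orlicz setting again uses $(A0)$--$(A1)$-type hypotheses), and (ii) a check that the reflected exponents retain (p2) and (q2), which is exactly what Lemma~\ref{p-extension} supplies. Finally, $p^++q^+\ge 1$ is not what makes $\Phi$ an $N$-function (that role is played by $p^-+q^-\ge1$, cf.\ Theorem~\ref{main theorem}); in the paper it guarantees the $(aDec)_q$ with $q\ge1$ hypothesis of Theorem~\ref{Justi theorem}.
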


For the necessary part, it was shown in \cite{GKP21, GKP23} that $\Omega$ must satisfy the measure density condition to have the embedding $W^{1,p(\cdot)}(\Omega)\hookrightarrow L^{p^*}(\Omega),$ if $p$ satisfies the log-H\"older condition. Note that $\Omega$ satisfies measure density condition if there exists a constant $c>0$ such that for every $x$ in $\bar{\Omega}$ and each $R$ in $]0,1/2],$ one has $| B_R(x)\cap \Omega|\geq cR^n.$ This condition was first appeared as a necessary condition for Sobolev embedding in \cite{HKT08} and later appeared as the same for other Sobolev-type embeddings as well, \cite{AYY22,AYY24,Kar19,Kar20}. Recently, a weaker version of the measure density condition, namely log-measure density condition, has appeared in \cite{HK22} as a necessary condition of certain Orlicz-Sobolev embedding and also in \cite{GKP23} as a necessary condition of Sobolev-type embedding of $W^{1,p(\cdot)}(\Omega)$ if $p$ is log-log-H\"older continuous on $\Omega.$

\begin{definition} \label{log-s} A subset $\Omega$ of $\mathbb{R}^n$ is said to satisfy the log $s$-measure density condition if there exist two positive constants \( c \) and \( \alpha \) such that 
 for every $x$ in $\bar{\Omega}$ and each $R$ in $]0,1/2]$ one has
\[
c R^s (\log (\frac{1}{R}))^{-\alpha} \leq | B_R(x)\cap \Omega|.
\]
If $s=n$, one says that \( \Omega \) satisfies the log-measure density condition.
\end{definition}
\noindent Here we prove that if the embedding holds, then $\Omega$ satisfies log-measure density condition. 
\begin{theorem} \label{main theorem}
Let $\Omega$ be an open subset of $\mathbb R^n,$ $\Phi(x,t):=t^{p(x)}{(\log(e+t))}^{q(x)}$ and $\Psi(x,t):=t^{p^*(x)}{(\log(e+t))}^{q(x)p^*(x)/p(x)} $ with ${p^- + q^-} \geq  1,$
where $p^*(x)$ denotes the Sobolev conjugate of $p(x),$ that is, $1/p^*(x) =1/p(x)-1/n$. Suppose that
\begin{enumerate}
    \item[1.] The exponent $p(\cdot )$ is log-H\"older continuous with $p^+<n$,
    \item[2.] $W^{1,\Phi(\cdot,\cdot)}(\Omega) \hookrightarrow L^{\Psi(\cdot,\cdot)}(\Omega) .$
\end{enumerate}
Then $\Omega$ satisfies the log-measure density condition.
\end{theorem}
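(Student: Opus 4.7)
The plan is to apply the embedding to a single hat-shaped test function per centre and scale, extract a self-improving lower bound on $|B_R(x)\cap\Omega|$, and iterate dyadically to obtain the log-measure density.

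Fix $x_0\in\overline{\Omega}$ and $R\in(0,1/2]$, and consider the test function $u(y)=(1-|y-x_0|/R)_+$. It belongs to $W^{1,\Phi(\cdot,\cdot)}(\Omega)$ and satisfies $0\le u\le 1$, $\operatorname{supp}u\subset B_R(x_0)$, $u\ge 1/2$ on $B_{R/2}(x_0)$, and $|\nabla u|\le 1/R$. Write $V=|B_R(x_0)\cap\Omega|$ and $W=|B_{R/2}(x_0)\cap\Omega|$ (both positive because $x_0\in\overline\Omega$). The first task is two-sided Luxemburg-norm estimates. Log-H\"older continuity of $p$, which transfers to $p^\ast(x)=np(x)/(n-p(x))$, lets me replace $p(y)$ and $p^\ast(y)$ on $B_R(x_0)$ by the centre values $p(x_0)$, $p^\ast(x_0)$ up to bounded factors $\exp(C|\log\lambda|/\log(1/R))$, which are $O(1)$ whenever the relevant Luxemburg parameter $\lambda$ is a positive power of $R$. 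The bound $q(y)\in[q^-,q^+]$ and the monotonicity of $s\mapsto(\log(e+s))^{q(y)}$ for $s\ge 1$ control the log factors in $\Phi$ and $\Psi$ by $(\log(e+s))^{q^\pm}$. Putting these bounds into $\rho_\Phi(u/\lambda)$, $\rho_\Phi(\nabla u/\lambda)$ and $\rho_\Psi(u/\lambda)$ and solving for the smallest $\lambda$ with $\rho(u/\lambda)\le 1$ will produce
\[
\|u\|_{W^{1,\Phi(\cdot,\cdot)}(\Omega)}\le C\,R^{-1}\,V^{1/p(x_0)}\,(\log(1/V))^{A},
\qquad
\|u\|_{L^{\Psi(\cdot,\cdot)}(\Omega)}\ge c\,W^{1/p^\ast(x_0)}\,(\log(1/W))^{B},
\]
with $A,B$ depending only on $n$, $p^\pm$ and $q^\pm$.

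Inserting these into the assumed embedding $\|u\|_{L^{\Psi(\cdot,\cdot)}}\le C\|u\|_{W^{1,\Phi(\cdot,\cdot)}}$, raising to the power $p(x_0)$, and using $W\le V$ to replace $\log(1/V)$ by $\log(1/W)$, I will arrive at the self-improving inequality
\[
V\;\ge\;c\,R^{p(x_0)}\,W^{(n-p(x_0))/n}\,(\log(1/W))^{\delta}
\]
for some real $\delta$ depending on $A,B,p,q$. Setting $R_k=R/2^k$ and $V_k=|B_{R_k}(x_0)\cap\Omega|$ and applying the inequality at every dyadic scale produces the recursion $V_k\ge c\,R_k^{p(x_0)}\,V_{k+1}^{\theta}(\log(1/V_{k+1}))^{\delta}$ with $\theta=(n-p(x_0))/n\in(0,1)$.

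Iterating the recursion $K$ times, the powers of $R_k$ combine into $R^{p(x_0)\sum_{i=0}^{K-1}\theta^i}$, and passing to the limit the geometric sum $\sum_{i=0}^\infty\theta^i=n/p(x_0)$ produces the desired factor $R^n$. The tail $V_K^{\theta^K}$ tends to $1$ because $\theta^K\to 0$ and the trivial bound $V_K\le C_n R_K^n$ forces $\log(1/V_K)$ to grow at most linearly in $K$. The accumulated log factor $\prod_{k\ge 0}(\log(1/V_{k+1}))^{\delta\theta^k}$ converges to a value controlled by a fixed power of $\log(1/R)$: using $V_{k+1}\le C_n R_{k+1}^n$ one estimates $\log(1/V_{k+1})\le Cn\log(1/R_{k+1})$, and then the sum $\sum_k\theta^k\log\log(1/R_{k+1})$ converges thanks to the summability of $k\theta^k$. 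The outcome will be
\[
|B_R(x_0)\cap\Omega|\;\ge\;c\,R^n(\log(1/R))^{-\alpha}
\]
with constants $c,\alpha>0$ independent of $x_0$ and $R$, which is precisely the log-measure density condition.

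The hardest step is expected to be the Luxemburg-norm estimates underlying the two bounds in the second paragraph: because $q$ is only continuous and not log-log-H\"older, the factor $(\log(e+t))^{q(y)}$ cannot be pulled out of the integral and must be estimated uniformly through the bounds $q^-\le q(y)\le q^+$, and it is precisely this uniform estimation that forces the log exponent $\delta$ into the recursion and ultimately produces a positive $\alpha$ rather than the strict measure density one would obtain if $q$ were log-log-H\"older.
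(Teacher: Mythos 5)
Your overall strategy (a test function at each scale, two-sided Luxemburg norm estimates for near-characteristic functions, iteration over scales) is the right family of ideas and is close in spirit to the paper, but your iteration scheme has a genuine gap that the paper's scheme is specifically designed to avoid. The problem is the tail of your dyadic recursion $V_k\ge c\,R_k^{p}V_{k+1}^{\theta}(\log(1/V_{k+1}))^{\delta}$. To make $V_K^{\theta^K}\to 1$, and to control the product $\prod_k(\log(1/V_{k+1}))^{\delta\theta^k}$ when $\delta<0$, you need an a priori \emph{upper} bound on $\log(1/V_K)$, i.e.\ a \emph{lower} bound on $V_K$; but the trivial bound $V_K\le C_nR_K^n$ that you invoke yields $\log(1/V_K)\ge n\log(1/R_K)-\log C_n$, a \emph{lower} bound on $\log(1/V_K)$ --- the inequality points the wrong way. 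A priori $V_K$ is only known to be positive, and, for instance, a doubly exponential decay $V_K=\exp(-2^KM)$ with $M$ large is consistent with your recursion whenever $\theta>1/2$ (i.e.\ $p(x_0)<n/2$); in that scenario $\theta^K\log(1/V_K)\to\infty$, the tail $V_K^{\theta^K}$ tends to $0$, and the iterated bound degenerates to $V_0\ge 0$. Since a quantitative lower bound on the $V_K$ is essentially the conclusion of the theorem, the argument is circular exactly at this step. The paper avoids the issue by choosing the inner radius adaptively: $\tilde R$ is the smallest radius with $|A_{\tilde R}|=\tfrac12|A_R|$, the cut-off has gradient $\le\tilde c/(R-\tilde R)$, and the single-scale estimate becomes $R-\tilde R\le C\,|A_R|^{\eta_R}(\log(e+1/|A_R|))^{\gamma}$; iterating produces measures exactly equal to $2^{-i}|A_R|$ and the convergent telescoping sum $R=\sum_i(R_i-R_{i+1})$, with no uncontrolled tail.

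A secondary instance of the same circularity occurs in your norm estimates: you freeze $p(y)$ at $p(x_0)$ using log-H\"older continuity ``whenever the Luxemburg parameter $\lambda$ is a positive power of $R$,'' but here $\lambda\approx V^{1/p}$ and you do not know a priori that $V$ is bounded below by a fixed power of $R$. This part is repairable: keep the estimates in terms of $|A|^{1/p_A^+}$ and $|A|^{1/p_A^-}$ (as in the paper's lemmas on $\|1_A\|_{L^{\Phi(\cdot,\cdot)}}$), carry the oscillation $1/p_{A_R}^--1/p_{A_R}^+$ through to the very end, and only then use log-H\"older continuity to bound $R^{p_{A_R}^+-p_{A_R}^-}$ from below by a constant. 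The tail issue, by contrast, cannot be patched within the dyadic scheme and requires replacing the radii $R/2^k$ by the measure-halving radii (or an equivalent stopping-time device).
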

Note that, we do not require the log-log-H\"older continuity of $q$ here, unlike Theorem \ref{main embedding}. On the other hand, the condition ${p^- + q^-} \geq  1$ implies the condition ${p^+ + q^+} \geq  1$ of Theorem \ref{main embedding}, both of which are trivial if $q^- \geq 0.$

\section{Notations and Preliminary Results}

\begin{definition}
 A function $f:(0,\infty) \rightarrow \mathbb{R}$ is almost increasing if there exists a constant $a\geq 1$ such that $f(s) \leq af(t)$ for all $0< s< t .$ Similarly, a function $f:(0,\infty) \rightarrow \mathbb{R}$ is almost decreasing if there exists a constant $b\geq 1$ such that $f(s) \geq bf(t)$ for all $0< s< t .$ 
 \end{definition}
 \begin{definition}
Let $f:(0,\infty)\rightarrow\mathbb{R}$ and $p,q>0$.We say that f satisfies
\begin{enumerate}
\item[(i)]  $(Inc)_{p}$      if $f(t)/t^p$ is increasing; 
\item[(ii)] $(aInc)_{p}$     if $f(t)/t^p$ is almost increasing;
\item[(iii)]$(Dec)_{q}$      if $f(t)/t^q$ is decreasing;
\item[(iv)] $(aDec)_{q}$     if $f(t)/t^q$ is almost decreasing.
\end{enumerate}
\end{definition}
\begin{definition}
     We say that the exponent $p(\cdot)$ satisfies the  log-H\"older decay condition if there exist $p_{\infty}\in \mathbb{R}$ and a constant $c_2>0$ such that 
    \begin{center}
    $|p(x)-p_{\infty}|\leq \frac{c_2}{\log(e+|x|)}$ for all $x\in\mathbb R^n$.
    \end{center}
\end{definition}
\begin{definition}
    We say that the exponent $p(\cdot)$ satisfies Nekvinda's decay condition, if there exists $c_1\in(0,1)$ and $p_{\infty}\in[1,\infty]$ such that 
   \begin{center}
       $\int_{(p(x)\not=p_{\infty})} c_1^{\frac{1}{|\frac{1}{p(x)}-\frac{1}{p_{\infty}}|}}dx < \infty$.
   \end{center}
\end{definition}
\begin{definition}
   Let $(\Omega,\Sigma,\mu)$ be a $\sigma$-finite, complete measure space. A function $\Phi: \Omega\times [0,\infty)\rightarrow [0,\infty]$ is said to be a (generalized) $\Phi$-prefunction on $(\Omega,\Sigma,\mu)$ if $x\rightarrow \Phi(x,|f(x)|)$ is measurable for every $f\in L^0(\Omega,\mu) $ and $\Phi(x,.)$ is a $\Phi$-prefunction for $\mu$-almost every $x\in \Omega $. We say that the $\Phi$-prefunction $\Phi$ is  a (generalized weak) $\Phi$-function if it satisfies $(aInc)_1$. 
The sets of generalized weak $\Phi$-function is denoted by    $\Phi_w(\Omega,\mu)$.
\end{definition}
\begin{definition}
     We say that $\Phi\in\Phi_w(\Omega,\mu)$ satisfies $(A0)$ if there exits a constant $\beta\in(0,1]$ such that
   \begin{center} 
     $\beta\leq\Phi^{-1}(x,1)\leq{1/\beta}$  \quad for $\mu $-almost every $x\in \Omega$.
     \end{center}
\end{definition}
\begin{definition}
     Let $\Phi\in\Phi_w(\Omega,\mu)$. We say that $\Phi$ satisfies (A1) if there exists $\beta\in(0,1)$ such that 
     \begin{center}
      $\beta\Phi^{-1}(x,t)\leq\Phi^{-1}(y,t)$   
     \end{center}
     for every $t\in[1,\frac{1}{|B|}]$, almost every $x,y\in B\cap\Omega$ and every ball $B $ with $|B|\leq 1$.
\end{definition}
\begin{definition}
     We say that $\Phi\in\Phi_w(\Omega,\mu)$ satisfies $(A2)$ if for every $s>0$ there exists $\beta\in(0,1]$ and $h\in L^1(\Omega)\cap L^{\infty}(\Omega)$ such that \begin{center}
      $\beta\Phi^{-1}(x,t)\leq\Phi^{-1}(y,t)$   
      \end{center}
      for almost every $x,y\in \Omega$ and every $t\in[h(x)+h(y),s]$.
\end{definition}
Here we prove an elementary result regarding a class of functions satisfying the above conditions.
 \begin{lemma}\label{phi satisfies all the condition}
If $p(\cdot)$ satisfies $(p1),(p2)$ and Nekvinda's decay condition and $q(\cdot)$ satisfies $(q1),(q2)$ 
 then the function $\Phi(x,t):= t^{p(x)}{(\log(e+t))}^{q(x)}$ satisfies $(A0), (A1), (A2)$ and $(Dec)_{p^+ +q^+}$.
\end{lemma}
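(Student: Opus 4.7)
The plan is to verify the four conditions $(A0)$, $(A1)$, $(A2)$, and $(Dec)_{p^+ + q^+}$ one at a time, leveraging the explicit product form of $\Phi$. For $(A0)$, only $(p1)$ and $(q1)$ are needed: $\Phi(x,\cdot)$ is strictly increasing from $0$ to $\infty$, and the uniform bounds on $p$ and $q$ produce universal constants $0 < c_1 < c_2$ with $\Phi(x,c_1) \le 1 \le \Phi(x,c_2)$ for almost every $x$, so $\Phi^{-1}(x,1) \in [c_1, c_2]$ and $(A0)$ holds with $\beta = \min(c_1, 1/c_2)$.

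For $(Dec)_{p^+ + q^+}$ I would compute the logarithmic $t$-derivative directly,
\[
t\,\frac{\partial}{\partial t}\log\Bigl(\frac{\Phi(x,t)}{t^{p^+ + q^+}}\Bigr) = p(x) - p^+ - q^+ + \frac{q(x)\,t}{(e+t)\log(e+t)}.
\]
Since the factor $t/((e+t)\log(e+t))$ lies in $[0,1]$, the last term is at most $\max\{0,q(x)\} \le q^+$, so the right-hand side is bounded by $p(x) - p^+ \le 0$, which gives the monotonicity of $\Phi(x,t)/t^{p^+ + q^+}$ in $t$.

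For $(A1)$ I would rewrite $\beta\Phi^{-1}(x,t) \le \Phi^{-1}(y,t)$ as $\Phi(y,\beta s) \le \Phi(x,s)$ with $s := \Phi^{-1}(x,t)$, and study the ratio
\[
\frac{\Phi(y,\beta s)}{\Phi(x,s)} = \beta^{p(y)}\, s^{\,p(y)-p(x)}\,\frac{(\log(e+\beta s))^{q(y)}}{(\log(e+s))^{q(x)}}.
\]
The constraint $t \le 1/|B|$ together with $\Phi(x,s) = t$ gives $s \lesssim t^{1/p^-} \lesssim |B|^{-1/p^-}$, hence $\log s \lesssim \log(1/|B|)$ and $\log\log(e+s) \lesssim \log\log(1/|B|)$. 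Since $|x-y| \le \diam(B) \lesssim |B|^{1/n}$, the log-H\"older hypothesis $(p2)$ then yields $|p(y)-p(x)|\log s \lesssim 1$, so $s^{p(y)-p(x)}$ is uniformly bounded; an analogous computation based on $(q2)$ bounds the log quotient. Choosing $\beta$ small enough to absorb the remaining multiplicative constants completes the step.

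For $(A2)$ the plan is to let $h$ record the deviation $|1/p(x) - 1/p_\infty|$ at the Nekvinda scale, which is exactly the condition placing $h$ in $L^1(\Omega) \cap L^\infty(\Omega)$. Then for $t \le s$ one has $u = \Phi^{-1}(x,t) \le C(s)$ uniformly, so that $u^{p(y)-p(x)}$ is controlled by the Nekvinda-based deviation from $p_\infty$ on the set where $p \neq p_\infty$, while the log factor is controlled by $(q1)$ and the boundedness of $u$. The main obstacle is $(A1)$: one must simultaneously balance $\log s \lesssim \log(1/|B|)$ against the log-H\"older modulus of $p$ \emph{and} $\log\log(e+s) \lesssim \log\log(1/|B|)$ against the log-log-H\"older modulus of $q$, uniformly on the admissible range $t \in [1, 1/|B|]$. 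It is the single chain of estimates $s \lesssim |B|^{-1/p^-}$, coming from $\Phi(x,s)=t$ and $(p1)$, that makes both balancings work at the same time.
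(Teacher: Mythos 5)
Your outline is correct and, for $(A0)$, $(A1)$ and $(Dec)_{p^++q^+}$, essentially coincides with the paper's argument: the paper also gets $(A0)$ from the uniform bounds $(p1)$, $(q1)$ (working with the explicit two-sided estimate for $\Phi^{-1}$ rather than with $\Phi$ itself), proves $(A1)$ by exactly the balancing you describe --- $t\le 1/|B|$ against the log-H\"older modulus of $p$ and $\log\log$ against the log-log-H\"older modulus of $q$ --- and obtains $(Dec)_{p^++q^+}$ from the same monotonicity computation (your logarithmic-derivative bound, like the paper's, silently uses $q^+\ge 0$ in the step $\max\{0,q(x)\}\le q^+$; the paper has the same blind spot, and the claim is genuinely problematic only if $q^+<0$). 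The one place where you take a different route is $(A2)$: the paper does not verify $(A2)$ from its definition but instead verifies the sufficient condition $(A2)'$, comparing $\Phi(x,\cdot)$ with the fixed function $\phi_\infty(t)=t^{p_\infty}$ via Young's inequality with exponents $p_\infty/p(x)$, choosing $h(x)=c_1^{1/|1/p(x)-1/p_\infty|}$, and then invoking Lemma 4.2.7 of the Harjulehto--H\"ast\"o book to pass from $(A2)'$ to $(A2)$; you instead propose to compare $\Phi^{-1}(x,t)$ and $\Phi^{-1}(y,t)$ directly on $[h(x)+h(y),s]$ with the same $h$. Both work; the paper's route outsources the two-point comparison to a cited lemma at the cost of case analysis on the sign of $q$, while yours keeps everything in one computation but then the essential point --- which your sketch leaves implicit --- is the \emph{lower} bound $t\ge h(x)+h(y)$: the upper bound $u=\Phi^{-1}(x,t)\le C(s)$ only controls $u^{p(y)-p(x)}$ when $p(y)\ge p(x)$, and for the opposite sign you must use $t\ge h(x)$ together with the triangle inequality through $p_\infty$ to get $t^{-|1/p(x)-1/p_\infty|}\le c_1^{-1}$. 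Spelling that out would make your $(A2)$ argument complete.
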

\begin{proof}
Case-1 ($q(x) \geq 0$ for all $x\in \Omega$):
It is easy to verify the existence of constants $c_1$ and $c_2$ such that
$$\frac{c_2t^{\frac{1}{p(x)}}}{(\log(e+t))^{\frac{q(x)}{p(x)}}}\leq\Phi^{-1}(x,t)\leq \frac{c_1t^{\frac{1}{p(x)}}}{(\log(e+t))^{\frac{q(x)}{p(x)}}}$$
and hence 
$$\frac{c_2}{2^{\frac{q^+}{p^-}}}\leq \frac{c_2}{(\log(e+1))^{\frac{q(x)}{p(x)}}} \leq \Phi^{-1}(x,1)\leq\frac{c_1}{(\log(e+1))^{\frac{q(x)}{p(x)}}} \leq c_1 .$$ 
Now we can choose  $c_1\geq 1$ so that $c_1c_2 \geq 2^{\frac{q^+}{p^-}}$. Then (A0) follows by choosing $\beta =\frac{1}{c_1}.$\\
Case-2 ($q(x)=-r(x)$ with $r(x) \geq 0$ for all $x\in \Omega$): It is easy to verify the existence of constants $c_3$ and $c_4$ such that
$$\frac{c_4t^{\frac{1}{p(x)}}}{(\log(e+t))^{\frac{q(x)}{p(x)}}}\leq\Phi^{-1}(x,t)\leq \frac{c_3t^{\frac{1}{p(x)}}}{(\log(e+t))^{\frac{q(x)}{p(x)}}}$$
and hence $$c_4\leq \frac{c_4}{(\log(e+1))^{\frac{-r(x)}{p(x)}}} \leq \Phi^{-1}(x,1)\leq\frac{c_3}{(\log(e+1))^{\frac{-r(x)}{p(x)}}} \leq c_32^{\frac{r^+}{p^-}} = \frac{c_3}{2^{\frac{q^-}{p^-}}}.$$ 
Now we can choose  $c_4\leq 1$ so that $c_3 c_4\leq  2^\frac{q^-}{p^-}$. Then (A0) follows by choosing $\beta ={c_4}.$\\
 Case-3 ($q(x) < 0$ for some $x\in \Omega$ and $q(x)\geq 0$ for some $x\in \Omega$): It is easy to verify the existence of constants $c_5$ and $c_6$ such that
$$\frac{c_6t^{\frac{1}{p(x)}}}{(\log(e+t))^{\frac{q(x)}{p(x)}}}\leq\Phi^{-1}(x,t)\leq \frac{c_5t^{\frac{1}{p(x)}}}{(\log(e+t))^{\frac{q(x)}{p(x)}}}$$
and hence $$\frac{c_6}{2^{\frac{q^+}{p^-}}}\leq \frac{c_6}{(\log(e+1))^{\frac{q(x)}{p(x)}}} \leq \Phi^{-1}(x,1)\leq\frac{c_5}{(\log(e+1))^{\frac{q(x)}{p(x)}}} \leq \frac{c_5}{\log^{\frac{q^-}{p^+}}(e+1)}.$$ 
Now we can choose  $c_6\leq 1$ so that $c_5 c_6\leq  2^\frac{q^+}{p^-} \log^{\frac{q^-}{p^+}}(e+1)$. Then (A0) follows by choosing $\beta =\frac{c_6}{2^\frac{q^+}{p^-}}.$\\
So all the cases of (A0) are done.\\

To show condition (A1), by symmetry, we may assume that $p(x)<p(y)$. If $t\in [1,\frac{1}{|B|}]$, then
    \begin{eqnarray*}
    \frac{\Phi^{-1}(x,t)}{\Phi^{-1}(y,t)}&\leq& c_1  t^{\frac{1}{p(x)}-\frac{1}{p(y)}}(\log(e+t))^{\frac{q(y)}{p(y)}-\frac{q(x)}{p(x)}} 
    \leq c_1  t^{\frac{1}{p(x)}-\frac{1}{p(y)}}(\log(e+t))^{|\frac{q(y)}{p(y)}-\frac{q(x)}{p(x)}|}\\
    &\leq &c_1 t^{\frac{1}{p(x)}-\frac{1}{p(y)}}(\log(e+t))^{|q(y)-q(x)|}\\
    &\leq & c_1|B|^{\frac{-c}{\log(e+\frac{1}{|x-y|})}}(\log(e+\frac{1}{|B|}))^{\frac{C}{ \log(e+\log(e+\frac{1}{|x-y|}))}}\\
    &\leq& c_1 e^{\frac{cn \log{\frac{1}{|x-y|}}}{\log(e+\frac{1}{|x-y|})}} { e^{\frac{c_0\log(\log(e+\frac{1}{|x-y|^n}))}{\log(e+\log(e+\frac{1}{|x-y|}))}}}\\
    &\leq& c_1 e^{cn} e^{ c_0+\frac{c_0\ln n}{\ln(\ln(e+1))}}.
    \end{eqnarray*}
    This yields that $\beta\Phi^{-1}(x,t) \leq \Phi^{-1}(y,t)$ where $\frac{1}{\beta} = c_1 e^{cn} e^{ c_0+\frac{c_0 \ln n}{\ln(\ln(e+1))}}$ and hence (A1) follows.\\
    
   To show condition (A2), first note that since the function $p(\cdot)$ satisfies Nekvinda's decay condition, so there exists $c_1\in(0,1)$ and $p_{\infty}\in[1,\infty]$ such that 
   \begin{center}
       $\int_{(p(x)\not=p_{\infty})} c_1^{\frac{1}{|\frac{1}{p(x)}-\frac{1}{p_{\infty}}|}}dx < \infty$.
   \end{center}
   Case-1 ($q(x) \geq 0$ for all $x\in \Omega$): For this case, take $s=1,$ $\phi_{\infty}(t)= t^{p_{\infty}}$ and $\beta \leq 1.$ Note that $ \phi_{\infty}(t)\leq 1$ implies $t\leq 1$. We will consider two cases. In the points where $p(x)< p_{\infty},$ by Young's inequality 
    \begin{eqnarray*}
        \Phi(x, \beta t)&\leq& \log^{q^+}(e+1)\beta^{p(x)} t^{p(x)} \\
        &\leq& \frac{p(x)}{p_{\infty}}t^{p_{\infty}} + \frac{p_{\infty} -p(x)}{p_{\infty}}(\log^{q^+}(e+1))^{\frac{p_{\infty}}{p_{\infty} -p(x)}}\beta^{\frac{1}{|\frac{1}{p(x)}-\frac{1}{p_{\infty}}|}}\\
        &\leq& \phi_{\infty}(t)+ (\beta\log^{q^+}(e+1))^{\frac{1}{|\frac{1}{p(x)}-\frac{1}{p_{\infty}}|}}.\\
    \end{eqnarray*}
    Let us take
    \begin{center}
    $\beta < c_1[\log^{q^+}(e+1)]^{-1},$\\
     $h(x) = (\beta[\log^{q^+}(e+1)])^{\frac{1}{|\frac{1}{p(x)}-\frac{1}{p_{\infty}}|}}$,
     \end{center}
     where $c_1\in(0, 1)$ is the constant of Nekvinda's decay condition of $p.$ Then we have that $h\in L^1(\Omega)\cap L^{\infty}(\Omega)$. In the points where $p(x)\geq p_{\infty},$ by taking same choice of $\beta$ we have 
     \begin{equation*}
         \Phi(x, \beta t) \leq \log^{q^+}(e+1)\beta t^{p_{\infty}} \leq \phi_{\infty}(t)\leq \phi_{\infty}(t) + h(x).
     \end{equation*}
     We do the other inequality similarly. In the points where $p(x)\leq  p_{\infty},$ as $\beta\leq 1$
    \begin{equation*}
        \phi_{\infty}(\beta t)\leq t^{p(x)}\leq \Phi(x, t)\leq \Phi(x, t)+ h(x),
    \end{equation*}
    and in the points where $p(x)>p_{\infty}$, using the Young's inequality
    \begin{equation*}
         \phi_{\infty}(\beta t)\leq \frac{p_{\infty}}{p(x)}t^{p(x)} + \frac{{p(x)}-p_{\infty} }{{p(x)}}\beta^{\frac{1}{|\frac{1}{p(x)}-\frac{1}{p_{\infty}}|}}\leq \Phi(x, t) + h(x),
    \end{equation*}
    which proves that $\Phi(x, t)$ satisfies $(A2)'.$\\
    
    Case-2 ($q(x)=-r(x)$ with $r(x) \geq 0$ for all $x\in \Omega$): For this case, take $s=1,$ $\phi_{\infty}(t)= t^{p_{\infty}}$ and $\beta \leq 1.$ Note that $ \phi_{\infty}(t)\leq 1$ implies $t\leq 1$. We will consider two cases. In the points where $p(x)< p_{\infty},$ by Young's inequality 
     \begin{eqnarray*}
        \Phi(x, \beta t)&\leq& \log^{q(x)}(e+\beta t)\beta^{p(x)} t^{p(x)} \\
        &\leq& \beta^{p(x)} t^{p(x)}\leq\frac{p(x)}{p_{\infty}}t^{p_{\infty}} + \frac{{p_{\infty}}-p(x) }{{p_{\infty}}}\beta^{\frac{1}{|\frac{1}{p(x)}-\frac{1}{p_{\infty}}|}}\leq \phi_{\infty}(t) + h(x) \\
    \end{eqnarray*}
    Let us take 
    \begin{center}
    $\beta <c_1\log^{q^-}(e+1),$ \\
   $ h(x) = c_1^{\frac{1}{|\frac{1}{p(x)}-\frac{1}{p_{\infty}}|}}$
    \end{center}
    where $c_1\in(0, 1)$ is the constant of Nekvinda's decay condition of $p.$ Then we have that $h\in L^1(\Omega)\cap L^{\infty}(\Omega)$. In the points where $p(x)\geq p_{\infty},$ by taking same choice of $\beta$ we have 
     \begin{equation*}
         \Phi(x, \beta t) = \log^{q(x)}(e+\beta t)\beta^{p(x)} t^{p(x)} \leq t^{p(x)} \leq \phi_{\infty}(t) \leq \phi_{\infty}(t) + h(x).
     \end{equation*}
      We do the other inequality similarly. In the points where $p(x)\leq  p_{\infty},$ as $\beta\leq 1$
      \begin{equation*}
        \phi_{\infty}(\beta t)= \beta^{p_{\infty}}t^{p_{\infty}}\leq t^{p(x)}\beta \leq t^{p(x)}\log^{q^-}(e+1)\leq  \Phi(x, t)\leq \Phi(x, t)+ h(x),
    \end{equation*}
    and in the points where $p(x)>p_{\infty}$, using the Young's inequality
    \begin{equation*}
         \phi_{\infty}(\beta t)\leq c_1^{p_{\infty}} (\log^{q(x)}(e+t))^{p_{\infty}}t^{p_{\infty}} \leq \frac{p_{\infty}}{p(x)}t^{p(x)} \log^{q(x)}(e+t)+ \frac{{p(x)}-p_{\infty} }{{p(x)}}c_1^{\frac{1}{|\frac{1}{p(x)}-\frac{1}{p_{\infty}}|}}\leq \Phi(x, t) + h(x),
    \end{equation*}
    which proves that $\Phi(x, t)$ satisfies $(A2)'.$\\
    
    Case-3 ($q(x) < 0$ for some $x\in \Omega$ and $q(x)\geq 0$ for some $x\in \Omega$): We can do this case similarly to the previous two cases by taking $s=1,$ $\phi_{\infty}(t)= t^{p_{\infty}}$, $\beta <c_1\frac{\log^{q^-}(e+1)}{\log^{q^+}(e+1)},$ and $ h(x) = c_1^{\frac{1}{|\frac{1}{p(x)}-\frac{1}{p_{\infty}}|}}$.\\
      
    So, by Lemma 4.2.7 of \cite{HH19}, (A2) follows for all the cases of $q(\cdot)$ (see also \cite{HHS}).\\

Finally, the condition $(Dec)_{p^+ +q^+}$ follows easily, since we have for $0\leq s\leq t,$ 
\begin{center}
    $\frac{\Phi(x,t)}{t^{p^+ +q^+}} = t^{p(x)-p^+-q^+}{(\log(e+t))}^{q(x)}\leq s^{p(x)-p^+-q^+}{(\log(e+s))}^{q(x)}=\frac{\Phi(x,s)}{s^{p^+ +q^+}}$,
\end{center}
where we have used the fact that the function $t^{Q(x)}{(\log(e+t))}^{q(x)}$ is decreasing when $Q(x)+q^+ \leq 0.$ 
\end{proof}

\begin{definition}
Let $\Phi\in\Phi_w(\Omega,\mu)$ and let $\rho_\Phi $ be given by
\begin{center}
  $\rho_\Phi(f)  := \int_\Omega\Phi(x,|f(x)|)d\mu(x)$
\end{center}	
for all $f\in L^0(\Omega,\mu)$. The function $\rho_\Phi $ is called a modular. The set 
\begin{center}
$ L^{\Phi(\cdot,\cdot)}(\Omega,\mu)$ := $\{f\in L^0(\Omega,\mu) :\rho_\Phi(\lambda f)< \infty  $ for some $\lambda > 0  \}$  
 \end{center}
 is called a generalized Orlicz space or Musielak-Orlicz (M-O) space.
 \end{definition}
 \begin{definition}
 Let $\Phi\in\Phi_w(\Omega,\mu)$. The function $u\in L^{\Phi(\cdot,\cdot)}\cap L^1_{loc}(\Omega)$ belongs to Musielak-Orlicz-Sobolev space $W^{1,\Phi(\cdot,\cdot)}(\Omega)$ if its weak partial derivatives $\delta_\alpha u$ exist and belong to $L^{\Phi(\cdot,\cdot)}(\Omega)$ for all $|\alpha|\leq 1$. We define a semimodular on $W^{1,\Phi(\cdot,\cdot)}(\Omega)$ by 
 \begin{center}
     $\rho_{W^{1,\Phi(\cdot,\cdot)}(\Omega)}(u) := \sum_{0\leq|\alpha|\leq 1}\rho_{\Phi} (\delta_\alpha u)$.
 \end{center}
 It induces a (quasi-) norm
 \begin{center}
$ ||u||_{W^{1,\Phi(\cdot,\cdot)} (\Omega)} := \inf  \{ \lambda >0 : \rho_ {W^{1,\Phi(\cdot,\cdot)} (\Omega)}\left(\frac{u}{\lambda}\right)\leq 1\}$.
 \end{center}
 \end{definition}
 
 Here we prove three lemmas to estimate the norm of the characteristic function of a measurable set, considering three different sets of values of $q.$
 \begin{lemma}\label{Norm 1 lemma}
Let $\Phi:\Omega\times[0,\infty)\rightarrow[0,\infty)$  be given by $\Phi(x,t):=t^{p(x)}(\log(e+t))^{q(x)}$ with $q(x)\geq 0$ for all $x$ and $A\subset \Omega$ is a measurable set. Then 
\begin{equation}\label{Norm 1 lemma proof}
     \min \{|A|^\frac{1}{p_{A}^+},|A|^\frac{1}{p_{A}^-} \} \leq \|1_{A}\|_{L^{\Phi(\cdot,\cdot)}(\Omega)} \leq \max \{ |A|^\frac{1}{p_{A}^+} (\log (e+\frac{1}{|A|}))^{q_{A}^+} ,|A|^\frac{1}{p_{A}^-} (\log(1+e))^{q_{A}^+} \},
\end{equation}  
\end{lemma}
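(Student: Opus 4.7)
The plan is to work directly with the Luxemburg formulation of the norm, namely
$\|1_A\|_{L^{\Phi(\cdot,\cdot)}(\Omega)}=\inf\{\lambda>0 : \int_A \Phi(x,1/\lambda)\,dx\leq 1\}$,
and to extract the two-sided estimate by comparing $\Phi(x,1/\lambda)$ against its envelopes obtained from $p_A^\pm$ and $q_A^+$. The hypothesis $q(x)\geq 0$ on $A$ together with $\log(e+t)\geq 1$ says that the log-factor is always at least one (which trivializes it from below), while from above it is controlled by either $(\log(e+1/|A|))^{q_A^+}$ or $(\log(1+e))^{q_A^+}$ depending on the range of $\lambda$.

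For the lower bound, any admissible $\lambda$ satisfies $\int_A (1/\lambda)^{p(x)}\,dx\leq 1$ after discarding the log-factor. I would then split according to whether $\lambda\leq 1$ or $\lambda>1$: in the first case $(1/\lambda)^{p(x)}\geq (1/\lambda)^{p_A^-}$ forces $\lambda\geq |A|^{1/p_A^-}$, while in the second $(1/\lambda)^{p(x)}\geq (1/\lambda)^{p_A^+}$ forces $\lambda\geq |A|^{1/p_A^+}$. Taking the infimum over admissible $\lambda$ yields the minorization by $\min\{|A|^{1/p_A^-},|A|^{1/p_A^+}\}$.

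For the upper bound, let $M$ denote the right-hand side of \eqref{Norm 1 lemma proof}. I would verify that $\rho_\Phi(1_A/M)\leq 1$, which by definition of the norm then gives $\|1_A\|_{L^{\Phi(\cdot,\cdot)}(\Omega)}\leq M$. The natural case split is first on $|A|\geq 1$ versus $|A|<1$, which identifies which of the two candidates realizes the max defining $M$, and then on $M\leq 1$ versus $M>1$, which determines whether to majorize $(1/M)^{p(x)}$ by $(1/M)^{p_A^-}$ or by $(1/M)^{p_A^+}$. Monotonicity of $\log(e+\cdot)$ reduces the log-factor to $(\log(e+1/|A|))^{q_A^+}$ or to $(\log(1+e))^{q_A^+}$ in the corresponding subcase. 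Substituting the explicit value of $M^{p_A^{\pm}}$ from its definition, each subcase reduces to a quantity of the form $(\log(\cdot))^{q_A^+(1-p_A^\pm)}$, which is $\leq 1$ because $q_A^+\geq 0$ and $p^-\geq 1$ (standing assumption (p1)) make the exponent nonpositive while $\log(\cdot)\geq 1$.

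The main obstacle will be the bookkeeping in the upper bound: the asymmetric appearance of $\log(e+1/|A|)$ in one candidate of the max versus $\log(1+e)$ in the other is precisely what is needed to cover the two regimes $|A|<1$ and $|A|\geq 1$, and one must verify that in each subcase the max picks out the correct branch so that $\rho_\Phi(1_A/M)\leq 1$ actually holds. No step is analytically deep; the work is essentially case analysis and algebraic cancellation between $M^{p_A^\pm}$ and $|A|$.
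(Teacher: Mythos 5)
Your proposal is correct and follows essentially the same route as the paper: both estimate the modular $\rho_\Phi$ of the candidate quantities built from $|A|^{1/p_A^{\pm}}$ and the log-factor, splitting according to whether the tested scale is $\leq 1$ or $>1$ so that $p_A^+$ or $p_A^-$ gives the right one-sided bound, and both exploit $q\geq 0$, $\log(e+\cdot)\geq 1$ and $p_A^-\geq 1$ to make the leftover logarithmic factor harmless. The only cosmetic difference is that the paper tests at auxiliary levels $u\to|A|^{\pm}$ to get strict modular inequalities, whereas you substitute $|A|$ directly and invoke the definition of the Luxemburg norm (and, for the lower bound, the admissibility of every $\lambda$), which is equally valid.
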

\begin{proof}
We start with the proof of the second inequality of \eqref{Norm 1 lemma proof}. Let $u>|A|$ and assume first that $u\leq 1$. Then 
\begin{eqnarray*}
\bigintsss_A \Phi\bigg(x,\frac{1}{u^{\frac{1}{p_{A}^+}}(\log(e+\frac{1}{u}))^{q_{A}^+}}\bigg)dx &= & \bigints_A \frac{\bigg(\log\Big(e+\frac{1}{u^{\frac{1}{p _{A}^+}}\big(\log(e+\frac{1}{u})\big)^{q_{A}^+}}\Big)\bigg)^{q(x)}}{u^{\frac{p(x)}{p_{A}^+}}(\log(e+\frac{1}{u}))^{p(x)q_{A}^+}} dx \\
&\leq& \frac{|A|\bigg(\log\Big(e+\frac{1}{u^{\frac{1}{p_{A}^+}}(\log(e+\frac{1}{u}))^{q_{A}^+}}\Big)\bigg)^{q_{A}^+}}{u(\log(e+\frac{1}{u}))^{q_{A}^+}}\\
&< & 1,\\
\end{eqnarray*}
where in the final inequality we have used the fact that $u^{{\frac{1}{p_{A}^+}}-1}(\log(e+\frac{1}{u}))^{q_{A}^+} \geq 1 $. Hence we have $\|1_{A}\|_{L^{\Phi(\cdot,\cdot)}(\Omega)} \leq u^\frac{1}{p_{A}^+} (\log (e+\frac{1}{u}))^{q_{A}^+}$.
If $u>1$, we can similarly show that $\|1_{A}\|_{L^{\Phi(\cdot,\cdot)}(\Omega)} \leq u^\frac{1}{p_{A}^-} (\log(1+e))^{q_{A}^+}$. The second inequality follows as $u \rightarrow |A|^+$.

Let us then prove the first inequality of \eqref{Norm 1 lemma proof}. Let $u<|A|$ and assume first that $u\leq 1$. Then 
\begin{equation}
    \bigintsss_A \Phi\Big(x,\frac{1}{u^{\frac{1}{p_{A}^-}}}\Big)dx = \bigintss_A \frac{\Big(\log\big(e+\frac{1}{u^{\frac{1}{p_{A}^-}}}\big)\Big)^{q(x)}}{u^{\frac{p(x)}{p_{A}^-}}} dx \geq \frac{|A|}{u} >1.
\end{equation}
 Hence we get  $u^\frac{1}{p_{A}^-}  \leq \|1_{A}\|_{L^{\Phi(\cdot,\cdot)}(\Omega)}$. If $u>1$, we can similarly show that $u^\frac{1}{p_{A}^+}  \leq \|1_{A}\|_{L^{\Phi(\cdot,\cdot)}(\Omega)}$. The first inequality follows as $u \rightarrow |A|^-$.
\end{proof}
\begin{lemma}\label{Norm 2 lemma}
Let $\Phi:\Omega\times[0,\infty)\rightarrow[0,\infty)$  be given by $\Phi(x,t):=t^{p(x)}(\log(e+t))^{q(x)}$ with $q(x) < 0$ for all $x$. Assume that $p_{A}^- + q_{A}^- \geq 1$ and $A\subset \Omega$ is a measurable set with $|A|< \frac{1}{2}$. Then 
\begin{equation}\label{Norm 2}
     |A|^\frac{1}{p_{A}^-} (\log (e+\frac{1}{|A|}))^\frac{q_{A}^-}{p_{A}^-}  \leq \|1_{A}\|_{L^{\Phi(\cdot,\cdot)}(\Omega)} \leq  \max \{|A|^\frac{1}{p_{A}^+},|A|^\frac{1}{p_{A}^-} \},
\end{equation}
\end{lemma}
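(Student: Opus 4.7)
The plan is to mirror the Luxemburg-norm strategy of Lemma \ref{Norm 1 lemma}: compute
\[
\|1_A\|_{L^{\Phi(\cdot,\cdot)}(\Omega)} = \inf\Big\{\lambda > 0 : \int_A \Phi(x,1/\lambda)\,dx \leq 1\Big\}
\]
by picking explicit values of $\lambda$ parametrised by a real number $u$ close to $|A|$. The qualitative novelty compared with the $q\geq 0$ case is that the factor $(\log(e+1/\lambda))^{q(x)}$ is now at most $1$ (since $\log(e+1/\lambda)\geq 1$ and $q(x)<0$), which simplifies the upper bound and complicates the lower bound.

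For the upper bound, fix $u>|A|$ with $u\leq 1$ (possible since $|A|<1/2$) and set $\lambda = u^{1/p_A^+}$. Then $\lambda \leq 1$ gives $\lambda^{-p(x)} \leq \lambda^{-p_A^+} = 1/u$, while $(\log(e+1/\lambda))^{q(x)}\leq 1$, so $\int_A \Phi(x,1/\lambda)\,dx \leq |A|/u < 1$. Letting $u\to|A|^+$ yields $\|1_A\|_{L^{\Phi(\cdot,\cdot)}(\Omega)} \leq |A|^{1/p_A^+}$, which agrees with the maximum in the statement because $|A|<1$ forces $|A|^{1/p_A^+}\geq |A|^{1/p_A^-}$.

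For the lower bound, fix $u<|A|$ and set $\lambda = u^{1/p_A^-}(\log(e+1/u))^{q_A^-/p_A^-}$; the goal is to prove $\int_A \Phi(x,1/\lambda)\,dx > 1$ and then take $u\to|A|^-$. Observing that $\lambda\leq 1$ and exploiting that $t\mapsto t^{a}$ is increasing in $a$ for $t\geq 1$, I obtain the pointwise bound $\Phi(x,1/\lambda) \geq \lambda^{-p_A^-}(\log(e+1/\lambda))^{q_A^-}$; substituting the explicit value of $\lambda$ and integrating gives
\[
\int_A \Phi(x,1/\lambda)\,dx \geq \frac{|A|}{u}\left(\frac{\log(e+1/\lambda)}{\log(e+1/u)}\right)^{q_A^-}.
\]
The main obstacle — and the only place where the hypothesis $p_A^- + q_A^- \geq 1$ is used — is proving that $\lambda > u$. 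Rewriting the hypothesis as $p_A^- - 1\geq |q_A^-|$ (using $q_A^-<0$), this reduces to $(p_A^- - 1)\log(1/u) > |q_A^-|\log\log(e+1/u)$, which I would verify from the elementary inequality $1/u > \log(e+1/u)$ valid on $(0,1/2]$ (the function $y\mapsto y-\log(e+y)$ has derivative $1-1/(e+y)>0$ and is positive at $y=2$). Granted $\lambda>u$, the parenthesised ratio lies in $(0,1)$ and, since $q_A^-<0$, the right-hand side is at least $|A|/u>1$, finishing the argument after the limit $u\to|A|^-$.
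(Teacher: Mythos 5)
Your proposal is correct and follows essentially the same route as the paper: the same test values $u^{1/p_{A}^+}$ and $u^{1/p_{A}^-}(\log(e+\frac{1}{u}))^{q_{A}^-/p_{A}^-}$ are inserted into the modular, and the hypothesis $p_{A}^- + q_{A}^- \geq 1$ is used exactly where the paper uses it, namely to guarantee $\lambda \geq u$ (the paper phrases this as the monotonicity of $t^r/(\log(e+t))^m$ for $r\geq m$). Your logarithmic verification of that key inequality is just a more explicit rendering of the paper's argument.
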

\begin{proof}
We start with the proof of the second inequality of \eqref{Norm 2}. Let $u>|A|$  and assume that $u\leq 1$. Then we have
\begin{equation}
    \bigintsss_A \Phi\big(x,\frac{1}{u^{\frac{1}{p_{A}^+}}}\big)dx = \bigints_A \frac{\big(\log(e+\frac{1}{u^{\frac{1}{p_{A}^+}}})\big)^{q(x)}}{u^{\frac{p(x)}{p_{A}^+}}} dx \leq \frac{|A|}{u} <1,
\end{equation}
and hence  $u^\frac{1}{p_{A}^+}  \geq \|1_{A}\|_{L^{\Phi(\cdot,\cdot)}(\Omega)}$.  If $u>1$, we can similarly show that $u^\frac{1}{p_{A}^-}  \geq \|1_{A}\|_{L^{\Phi(\cdot,\cdot)}(\Omega)}$. The second inequality follows as $u \rightarrow |A|^+$.

Let us then prove the first inequality of \eqref{Norm 2}. Let $u<|A|$. Then 
\begin{eqnarray*}
\bigintss_A \Phi\bigg(x,\frac{1}{u^{\frac{1}{p_{A}^-}}(\log(e+\frac{1}{u}))^{\frac{q_{A}^-}{p_{A}^-}}}\bigg)dx &= & \bigints_A \frac{\bigg(\log(e+\frac{1}{u^{\frac{1}{p _{A}^-}}\big(\log(e+\frac{1}{u})\big)^{\frac{q_{A}^-}{p_{A}^-}}})\bigg)^{q(x)}}{u^{\frac{p(x)}{p_{A}^-}}(\log(e+\frac{1}{u}))^{p(x)\frac{q_{A}^-}{p_{A}^-}}} dx \\
&\geq& \frac{|A|\bigg(\log(e+\frac{1}{u^{\frac{1}{p_{A}^-}}\big(\log(e+\frac{1}{u})\big)^{\frac{q_{A}^-}{p_{A}^-}}})\bigg)^{q_{A}^-}}{u\big(\log(e+\frac{1}{u})\big)^{q_{A}^-}}\\
&>& 1,\\
\end{eqnarray*}
where we have used the fact that $u^{\frac{1}{p_{A}^-} -1}(\log(e+\frac{1}{u}))^{\frac{q_{A}^-}{p_{A}^-}} > 1$ under the assumption $p_{A}^- + q_{A}^- \geq 1$ which follows from the increasing property of the function $\Phi(t) = \frac{t^r}{(\log(e+t))^m}$ for all $t>2$ when $r\geq m,$ $m>0.$ 
Therefore we have $u^{\frac{1}{p_{A}^-}}(\log(e+\frac{1}{u}))^{\frac{q_{A}^-}{p_{A}^-}}\leq ||1_{A}||_{L^{\Phi(\cdot,\cdot)}(\Omega)}.$ The first inequality follows as $u \rightarrow |A|^-$.
\end{proof}

\begin{lemma}\label{Norm 3 lemma}
Let $\Phi:\Omega\times[0,\infty)\rightarrow[0,\infty)$  be given by $\Phi(x,t):=t^{p(x)}(\log(e+t))^{q(x)}$ where $q(x) < 0$ for some $x $ and $q(x)\geq 0$ for some $x$. Assume that $p_{A}^- + q_{A}^- \geq 1$ and $A\subset \Omega$ is a measurable set with $|A|< \frac{1}{2}$. Then, there exist constants $b_1>0$, $b_2>0$ such that
\begin{equation}\label{Norm 3}
     b_1|A|^\frac{1}{p_{A}^-} \big(\log (e+\frac{1}{|A|})\big)^\frac{q_{A}^-}{p_{A}^-}  \leq \|1_{A}\|_{L^{\Phi(\cdot,\cdot)}(\Omega)} \leq b_2 \max  \{ |A|^\frac{1}{p_{A}^+} \big(\log (e+\frac{1}{|A|})\big)^{q_{A}^+} ,|A|^\frac{1}{p_{A}^-} \big(\log(1+e)\big)^{q_{A}^+} \},
\end{equation}
\end{lemma}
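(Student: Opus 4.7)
The plan is to combine the strategies of Lemmas \ref{Norm 1 lemma} and \ref{Norm 2 lemma}, exploiting the fact that in this mixed-sign case we have $q_A^- < 0$ and $q_A^+ \geq 0$, while $\log(e+t) \geq 1$ for all $t \geq 0$, so the pointwise comparisons between $(\log(e+t))^{q(x)}$ and $(\log(e+t))^{q_A^{\pm}}$ survive regardless of the sign of $q(x)$.

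For the upper bound in \eqref{Norm 3}, I would mimic the proof of Lemma \ref{Norm 1 lemma}: test with $\frac{1}{u^{1/p_A^+}(\log(e+1/u))^{q_A^+}}$ for $u > |A|$ with $u \leq 1$, and with $\frac{1}{u^{1/p_A^-}(\log(1+e))^{q_A^+}}$ for $u > 1$. The pointwise inequality $(\log(e+t))^{q(x)} \leq (\log(e+t))^{q_A^+}$ holds on all of $A$: where $q(x) \geq 0$ it follows from $q(x) \leq q_A^+$ and $\log(e+t) \geq 1$, while where $q(x) < 0$ the left side is at most $1$, which in turn is at most $(\log(e+t))^{q_A^+}$ since $q_A^+ \geq 0$. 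Combined with $u^{p(x)/p_A^+} \geq u$ and $(\log(e+1/u))^{p(x) q_A^+} \geq (\log(e+1/u))^{q_A^+}$ (the latter using $p(x) \geq 1$ from (p1) and $q_A^+ \geq 0$), the modular is dominated by $|A|/u < 1$, yielding the desired upper bound as $u \to |A|^+$.

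For the lower bound in \eqref{Norm 3}, I would mimic the proof of Lemma \ref{Norm 2 lemma} essentially verbatim, testing with $\frac{1}{u^{1/p_A^-}(\log(e+1/u))^{q_A^-/p_A^-}}$ for $u < |A|$. The crucial pointwise inequality $(\log(e+t))^{q(x)} \geq (\log(e+t))^{q_A^-}$ uses only $q_A^- \leq q(x)$ and $\log(e+t) \geq 1$, not the sign of $q(x)$; this is precisely what makes the Lemma \ref{Norm 2 lemma} argument portable to the mixed-sign setting. The auxiliary bounds $u^{p(x)/p_A^-} \leq u$ and $(\log(e+1/u))^{p(x) q_A^-/p_A^-} \leq (\log(e+1/u))^{q_A^-}$ follow from $p(x) \geq p_A^-$ and $q_A^- < 0$; the hypothesis $p_A^- + q_A^- \geq 1$, via the monotonicity of $t^r/(\log(e+t))^m$ with $r \geq m$ and $m > 0$ invoked in Lemma \ref{Norm 2 lemma}, then forces the resulting modular to exceed $1$.

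The main technical point is verifying that the one-sided comparisons between $(\log(e+t))^{q(x)}$ and $(\log(e+t))^{q_A^{\pm}}$ remain valid at points where $q(x)$ has the ``wrong'' sign relative to $q_A^{\pm}$; this is precisely where the constants $b_1$ and $b_2$ in the statement enter, absorbing the small extra factors (for instance a loss of order $(\log(1+e))^{q_A^+ - q(x)}$ on the subset where $q(x) < 0 \leq q_A^+$) that arise when one passes between pointwise and uniform exponents. Beyond this observation, the argument is a routine adaptation of the two preceding lemmas.
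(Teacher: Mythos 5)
Your proposal is correct and follows essentially the same route as the paper: the same test functions $u^{-1/p_A^{\pm}}$ weighted by the appropriate powers of $\log(e+1/u)$, with the modular estimated via the pointwise bounds $(\log(e+t))^{q_A^-}\leq(\log(e+t))^{q(x)}\leq(\log(e+t))^{q_A^+}$ and the same monotonicity fact $u^{1/p_A^- -1}(\log(e+1/u))^{q_A^-/p_A^-}\geq 1$ coming from $p_A^-+q_A^-\geq 1$. The only (cosmetic) difference is that the paper splits the integral over $A\cap\{q\geq 0\}$ and $A\cap\{q<0\}$ and inserts factors of $2$ into the test functions, whereas your uniform one-sided bounds (valid since $q_A^-<0\leq q_A^+$ and $\log(e+t)\geq 1$) make the split unnecessary and in fact yield $b_1=b_2=1$.
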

\begin{proof}
We start with the proof of the second inequality of \eqref{Norm 3}. Let $u>|A|$ and assume first that $u\leq 1$. Then 
\begin{eqnarray*}
\bigintsss_A \Phi\Big(x,\frac{1}{2u^{\frac{1}{p_{A}^+}}(\log\big(e+\frac{1}{2u}\big)\big)^{q_{A}^+}}\Big)dx &= & \bigintss_A  \frac{\bigg(\log\Big(e+\frac{1}{2u^{\frac{1}{p _{A}^+}}\big(\log\big(e+\frac{1}{2u}\big)\big)^{q_{A}^+}}\Big)\bigg)^{q(x)}}
{2^{p(x)} u^{\frac{p(x)}{p_{A}^+}}\big(\log(e+\frac{1}{2u})\big)^{p(x)q_{A}^+}} dx \\
&= &  \bigintss_{A\cap (x: q(x)\geq 0)}  \frac{\bigg(\log\Big(e+\frac{1}{2u^{\frac{1}{p _{A}^+}}\big(\log\big(e+\frac{1}{2u}\big)\big)^{q_{A}^+}}\Big)\bigg)^{q(x)}}
{2^{p(x)}u^{\frac{p(x)}{p_{A}^+}}\bigg(\log\big(e+\frac{1}{2u}\big)\bigg)^{p(x)q_{A}^+}} dx \\
&+ & \bigintss_{A\cap (x: q(x)< 0)} \frac{\bigg(\log\Big(e+\frac{1}{2u^{\frac{1}{p _{A}^+}}\big(\log\big(e+\frac{1}{2u}\big)\big)^{q_{A}^+}}\Big)\bigg)^{q(x)}}
{2^{p(x)}u^{\frac{p(x)}{p_{A}^+}}\big(\log\big(e+\frac{1}{2u}\big)\big)^{p(x)q_{A}^+}} dx \\
&\leq& \frac{1}{2}\frac{|A|\bigg(\log\Big(e+\frac{1}{2u^{\frac{1}{p_{A}^+}}\big(\log\big(e+\frac{1}{2u}\big)\big)^{q_{A}^+}}\Big)\bigg)^{q_{A}^+}}{u\big(\log\big(e+\frac{1}{2u}\big)\big)^{q_{A}^+}} + \frac{|A|}{2u}\\
&< & 1,\\
\end{eqnarray*} 
where in the final inequality, we have used the fact that $u^{{\frac{1}{p_{A}^+}}-1}(\log(e+\frac{1}{u}))^{q_{A}^+} \geq 1 $. Hence we have $\|1_{A}\|_{L^{\Phi(\cdot,\cdot)}(\Omega)} \leq 2u^\frac{1}{p_{A}^+} (\log (e+\frac{1}{2u}))^{q_{A}^+}$.
If $u>1$, we can similarly show that $\|1_{A}\|_{L^{\Phi(\cdot,\cdot)}(\Omega)} \leq u^\frac{1}{p_{A}^-} (\log(1+e))^{q_{A}^+}$. The second inequality follows as $u \rightarrow |A|^+$.\\
Let us then prove the first inequality of \eqref{Norm 3}. Let $u<|A|$. Then 
\begin{eqnarray*}
& &\bigintss_A \Phi\bigg(x,\frac{1}{2^{\frac{1}{{p_A}^+}}u^{\frac{1}{p_{A}^-}}\big(\log(e+\frac{1}{u2^{1/{p_A}^+}})\big)^{\frac{q_{A}^-}{p_{A}^-}}}\bigg)dx \\
& = & \bigints_A \frac{\bigg(\log(e+\frac{1}{2^{\frac{1}{{p_A}^+}}u^{\frac{1}{p _{A}^-}}\big(\log(e+\frac{1}{u2^{1/{p_A}^+}})\big)^{\frac{q_{A}^-}{p_{A}^-}}})\bigg)^{q(x)}}{2^{\frac{p(x)}{{p_A}^+}}u^{\frac{p(x)}{p_{A}^-}}(\log(e+\frac{1}{u2^{1/{p_A}^+}}))^{p(x)\frac{q_{A}^-}{p_{A}^-}}} dx \\
&=& \bigintss_{A \cap (x: q(x) \geq 0)} \frac{\bigg(\log(e+\frac{1}{2^{\frac{1}{{p_A}^+}}u^{\frac{1}{p _{A}^-}}\big(\log(e+\frac{1}{u2^{1/{p_A}^+}})\big)^{\frac{q_{A}^-}{p_{A}^-}}})\bigg)^{q(x)}}{2^{\frac{p(x)}{{p_A}^+}}u^{\frac{p(x)}{p_{A}^-}}\big(\log(e+\frac{1}{u2^{1/{p_A}^+}})\big)^{p(x)\frac{q_{A}^-}{p_{A}^-}}} dx\\
&+ &\bigintss_{A \cap (x: q(x) < 0)}\frac{\bigg(\log(e+\frac{1}{2^{\frac{1}{{p_A}^+}}u^{\frac{1}{p _{A}^-}}\big(\log(e+\frac{1}{u2^{1/{p_A}^+}})\big)^{\frac{q_{A}^-}{p_{A}^-}}})\bigg)^{q(x)}}{2^{\frac{p(x)}{{p_A}^+}}u^{\frac{p(x)}{p_{A}^-}}\big(\log(e+\frac{1}{u2^{1/{p_A}^+}})\big)^{p(x)\frac{q_{A}^-}{p_{A}^-}}} dx \\
&\geq& \frac{|A|}{2u} + \frac{|A|\bigg(\log(e+\frac{1}{2^{\frac{1}{{p_A}^+}}u^{\frac{1}{p_{A}^-}}\big(\log(e+\frac{1}{u2^{1/{p_A}^+}})\big)^{\frac{q_{A}^-}{p_{A}^-}}})\bigg)^{q_{A}^-}}{2u\big(\log(e+\frac{1}{u2^{1/{p_A}^+}})\big)^{q_{A}^-}}\\
&>& 1,\\
\end{eqnarray*}
where in the final inequality we have used the fact that $u^{\frac{1}{p_{A}^-} -1}(\log(e+\frac{1}{u2^{1/{p_A}^+}}))^{\frac{q_{A}^-}{p_{A}^-}} > 1$ when  $p_{A}^- + q_{A}^- \geq 1.$ Therefore we have $2^{\frac{1}{{p_A}^+}}u^\frac{1}{p_{A}^-} (\log (e+\frac{1}{u2^{1/{p_A}^+}}))^\frac{q_{A}^-}{p_{A}^-}  \leq \|1_{A}\|_{L^{\Phi(\cdot,\cdot)}(\Omega)}.$ The first inequality follows as $u \rightarrow |A|^-$.
\end{proof}

In the following lemma, we extend the exponent $p$ from $\Omega$ to $\mathbb{R}^n$; we will use the lemma only to extend the exponent functions here, preserving the modulus of continuity as well as upper and lower bounds, using the technique of Edmunds and R\'akosn\'ik \cite[Theorem 4.1]{ER} which was originally introduced by Hestenes \cite{Hestenes}. The same method was also used in \cite{Die04}. We recall the proof here for the convenience of the readers. Also, we will only use the lemma to extend the exponent functions.
  \begin{lemma}\label{extension of the exponents}
   Let $\Omega \subset \mathbb R^n$  be an open, bounded set with Lipschitz boundary. Let $p:\Omega \rightarrow (-\infty,\infty)$ satisfy the uniform continuity condition
   \begin{center}
       $|p(x)-p(y)|\leq \rho(|x-y|)$ \quad for all $x,y \in \Omega$
   \end{center}
   where $\rho$ is concave for $t\geq 0$ and $\rho(t) \rightarrow0$ for $t\rightarrow 0^+$. Then there exists an extension $p_1$ on $\mathbb R^n$ of $p$ and a constant $C>0$, such that 
   \begin{center}
    $|p_1(x)-p_1(y)|\leq \rho(C|x-y|)$ \quad for all $x,y \in \Omega$   
   \end{center}
   Moreover, there holds $p_1^-=p^-$ and $p_1^+=p^+$.
  \end{lemma}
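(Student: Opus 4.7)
My plan is to follow the Hestenes--Edmunds--R\'akosn\'ik reflection scheme. Since $\Omega$ is bounded with Lipschitz boundary, I first cover $\partial\Omega$ by finitely many open balls $B_1,\dots,B_N$ in each of which, after a suitable rotation, $\partial\Omega\cap B_i$ is the graph of a Lipschitz function $\phi_i$, with $\Omega\cap B_i$ lying strictly on one side of the graph. On each $B_i$ I introduce the local reflection $\sigma_i(x',x_n):=(x',2\phi_i(x')-x_n)$ in the rotated coordinates; the map $\sigma_i$ is bi-Lipschitz with constant depending only on $\Lip(\phi_i)$, fixes $\partial\Omega\cap B_i$ pointwise, and, after shrinking $B_i$ if needed, sends $B_i\setminus\overline\Omega$ into $\Omega$. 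I also pick an open set $B_0$ with $\overline{B_0}\subset\Omega$ so that $\{B_i\}_{i=0}^N$ covers $\overline\Omega$, together with a smooth partition of unity $\{\eta_i\}_{i=0}^N$ subordinate to this cover on an open neighborhood $U$ of $\overline\Omega$.

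I then set $p_i(x):=p(x)$ for $x\in B_i\cap\Omega$ and $p_i(x):=p(\sigma_i(x))$ for $x\in B_i\setminus\overline\Omega$ (with $p_0:=p|_{B_0}$), and define $p_1(x):=\sum_{i=0}^N\eta_i(x)\,p_i(x)$ on $U$, glued by a further smooth cutoff to the constant $p^-$ outside $U$. Because $\sum_i\eta_i\equiv 1$ on $U$, $p_1$ is a convex combination of values of $p$, all of which lie in $[p^-,p^+]$, so the extended function automatically satisfies $p_1^-=p^-$ and $p_1^+=p^+$; and $p_1=p$ on $\Omega$ since the reflected branch is never used there.

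For the continuity estimate I expand $p_1(x)-p_1(y)$ using $\sum_i\eta_i\equiv 1$ into terms of the form $\eta_i(x)\bigl(p_i(x)-p_i(y)\bigr)$ and $\bigl(\eta_i(x)-\eta_i(y)\bigr)p_i(y)$. The first is bounded by $\rho(L|x-y|)$, since $\sigma_i$ is $L$-Lipschitz and $p$ has modulus of continuity $\rho$; the second is $O(|x-y|)$ by the Lipschitz control of $\eta_i$ together with the bound $p^-\le p_i\le p^+$. Because $\rho$ is concave with $\rho(0^+)=0$, the quotient $\rho(t)/t$ is non-increasing, so $|x-y|\le c\,\rho(|x-y|)$ on any bounded range of $|x-y|$; for $|x-y|$ outside this range (and in particular for points far from $\overline\Omega$) the trivial estimate $|p_1(x)-p_1(y)|\le p^+-p^-$ is dominated by $\rho$ evaluated at a fixed positive length. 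Absorbing all accumulated constants into a single dilation of the argument yields the claimed inequality $|p_1(x)-p_1(y)|\le\rho(C|x-y|)$. The main technical point is precisely this bookkeeping: tracking the Lipschitz constants of the reflections and of the partition of unity together with the uniform bound on $p$, and absorbing them into the single constant $C$ in the argument of $\rho$ without violating the stated form of the estimate.
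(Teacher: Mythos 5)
Your construction follows the same Hestenes--Edmunds--R\'akosn\'ik reflection idea as the paper, but the gluing step is genuinely different: you patch the locally reflected functions together with a partition of unity, whereas the paper extends each reflected piece ${p_1}_j$ from $V_j\cup\Omega$ to all of $\mathbb R^n$ by McShane's theorem (which preserves the concave modulus $\rho(C\,\cdot)$ and the bounds exactly) and then takes the pointwise minimum $p_1:=\min_j\tilde{p_1}_j$. Since a finite minimum of functions sharing the modulus $\rho(C\,\cdot)$ again has that modulus, the paper never produces an additive error term and never needs to re-absorb constants.

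That difference is exactly where your argument has a gap. The partition-of-unity terms $\bigl(\eta_i(x)-\eta_i(y)\bigr)p_i(y)$ contribute an additive error of size $O(|x-y|)$, which you convert (correctly, using that $\rho(t)/t$ is non-increasing) into $c'\rho(|x-y|)$; together with the reflected term you obtain $|p_1(x)-p_1(y)|\le (1+c')\,\rho(L|x-y|)$, i.e.\ a multiplicative constant \emph{outside} $\rho$. The final sentence, ``absorbing all accumulated constants into a single dilation of the argument,'' is not valid for a general concave modulus: concavity with $\rho(0^+)=0$ gives $\rho(Ct)\le C\rho(t)$, an inequality in the wrong direction, and $\lambda\rho(t)\le\rho(Ct)$ fails outright whenever $\rho$ is bounded (e.g.\ $\rho(t)=\min(t,1)$, or indeed the paper's own $\rho(t)=c/\log(e+1/t)$ for $t$ near the top of the admissible range). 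So as written you prove $|p_1(x)-p_1(y)|\le C'\rho(C|x-y|)$, not the stated $\rho(C|x-y|)$. For the way the lemma is actually used (log-H\"older and log-log-H\"older moduli, where only ``the same condition with possibly different constants'' is needed in Lemma \ref{p-extension}), your weaker conclusion suffices; but to prove the lemma as stated you should either replace the partition of unity by the McShane-plus-minimum gluing, or restate the conclusion with the constant outside $\rho$. A further small point: in the mixed case $x\in\Omega$, $y\notin\overline\Omega$ the bound $|x-\sigma_i(y)|\le L|x-y|$ is not literally the Lipschitz property of $\sigma_i$; it needs the observation that the segment from $x$ to $y$ meets the graph at a fixed point of $\sigma_i$, which you should record explicitly.
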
 
  \begin{proof}
      Let ${V_j}, j= 1, . . ., k,$ be the covering of the boundary $\partial\Omega$ which corresponds to the local description of $\partial\Omega$. More precisely, for each $ j= 1, . . ., k,$ there is a local coordinate system $(x', x_n)$ such that
      \begin{center}
          $V_j= \{(x', x_n): |x_i|<\delta, i= 1, . . ., n-1,  {a_j}(x')-\beta<x_n<{a_j}(x')+\beta\},$
          $V_j\cap \Omega = \{x\in V_j : {a_j}(x')<x_n<{a_j}(x')+\beta\} $
      \end{center}
      and
      \begin{center}
          $\{x\in \bar{V_j} : x_n<{a_j}(x')\} \cap \bar{\Omega} = \varnothing, $
 \end{center}
    where $\beta, \delta$ are some fixed positive numbers and ${a_j} \in C^{0,1}((-\delta,\delta)^{n-1})$ are the functions describing the boundary. Define the mappings 
    \begin{center}
        $T_j : (-\delta,\delta)^{n-1} \times (-\beta,\beta)\rightarrow \mathbb R^n, \quad j= 1, . . ., n, $ 
    \end{center}
    by
    \begin{center}
        $T_j(x',x_n) = (x', x_n+ a_j(x')).$
    \end{center}
    Then the $T_j$ are bi-Lipschitz mappings.
    To these flattened domains ${{T_j}^{-1}}(V_j)$, define the reflection operator 
      \begin{equation*}
          Ef(x) = \begin{cases}
          f(x' ,x_n) \quad for \quad x_n\geq0,\\
           f(x' ,x_n) \quad for \quad x_n<0
          \end{cases}
      \end{equation*}
  and the functions ${p_1}_j$ on $V_j\cup \Omega$ by
      \begin{equation*}
          {p_1}_j(x) = \begin{cases}
                   p(x)     \quad           for \quad x\in \Omega,\\
              Er_j({{T_J}^{-1}}(x))) \quad for \quad x\in V_j/\Omega,
          \end{cases}
      \end{equation*}
      where $r_j := p\circ T_j.$ Note that since $E, T_j,$ and ${T_j}^{-1}$ are Lipschitz there exists $C>0$ such that 
      \begin{center}
         $|{{p_1}_j}(x)-{{p_1}_j}(y)| \leq \rho(C|x-y|)$ \quad for all $x,y \in \Omega$
      \end{center}
      Then extend the functions ${p_1}_j$ on $\Omega$ to $\tilde{{p}_1}_j$ on $\mathbb R^n$ preserving their upper and lower bounds. Note that this extension is possible due to McShane \cite[Theorem 2 and Corollary 2]{McSHANE34} and the fact that $\rho$ is concave with $\rho(t) \rightarrow0$ for $t\rightarrow 0^+$. Define $p_1:\mathbb  R^n \rightarrow (-\infty,\infty)$ by 
      \begin{center}
         $ {p_1}(x) := \min\limits_{{j=1,...,k}} {\tilde{{p}_1}_j}(x) $ \quad for \quad $x\in \mathbb R^n$ 
      \end{center}
      Thus there holds 
      \begin{center}
    $|p_1(x)-p_1(y)|\leq \rho(C|x-y|)$ \quad for all $x,y \in \Omega$   
   \end{center}
 This proves the theorem.
  \end{proof}
  \begin{lemma} \label{p-extension}
  Let $\Omega \subset \mathbb R^n$  be an open, bounded set with Lipschitz boundary. Suppose that $p$ satisfies $(p1),$ $(p2)$ and $q$ satisfies $(q1)$ and $(q2).$
  Then there exists an extension $p_1$ on $\mathbb R^n$ of $p$ with $p_1^-=p^-,$ $p_1^+=p^+$ and an extension $q_1$ on $\mathbb R^n$ of $q$ with $q_1^-=q^-$ and $q_1^+=q^+$, which satisfies the same local uniform continuity conditions (with possibly different constants). 
  \end{lemma}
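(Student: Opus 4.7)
The plan is to apply Lemma \ref{extension of the exponents} separately to $p$ and to $q$, with the moduli of continuity $\rho_p(t) := C/\log(e+1/t)$ and $\rho_q(t) := C/\log(e+\log(e+1/t))$ furnished by conditions (p2) and (q2). Both $\rho_p$ and $\rho_q$ are continuous on $[0,\infty)$, bounded, increasing, and vanish as $t\to 0^+$, so the only hypothesis of Lemma \ref{extension of the exponents} that needs inspection is their concavity.

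For $\rho_p$, concavity can be checked directly: writing $L(t)=\log(e+1/t)$ and using $L'(t)=-1/(t(te+1))$, a short computation expresses $\rho_p'' = (1/L)''$ as a negative multiple of $L[(2te+1)L-2]$, which is nonnegative for all $t>0$ since $L$ is large where $t$ is small and $(2te+1)$ is large where $t$ is large. An entirely analogous calculation handles $\rho_q$. A more robust alternative, should either direct computation prove unwieldy, is to replace each modulus by its least concave majorant on $[0,\infty)$; since the originals are continuous and vanish at $0$, these majorants retain both properties while dominating the originals, and thus the log-H\"older and log-log-H\"older estimates are preserved up to multiplicative constants.

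With the concavity settled, Lemma \ref{extension of the exponents} applied with $\rho_p$ (or its concave majorant) produces an extension $p_1$ of $p$ to $\mathbb{R}^n$ with $p_1^-=p^-$, $p_1^+=p^+$ and $|p_1(x)-p_1(y)|\leq \rho_p(C|x-y|)$ throughout $\mathbb{R}^n$, so that $p_1$ satisfies (p2) on $\mathbb{R}^n$ with a possibly larger constant. The same lemma applied with $\rho_q$ yields the analogous extension $q_1$ with $q_1^-=q^-$, $q_1^+=q^+$ and (q2) preserved on $\mathbb{R}^n$. The only delicate step in this scheme is the concavity verification (or its circumvention via the majorant trick); everything else is a direct invocation of Lemma \ref{extension of the exponents}, whose construction---reflection through the Lipschitz graphs $a_j$ followed by a McShane-type extension---already respects both the modulus of continuity and the pointwise upper and lower bounds.
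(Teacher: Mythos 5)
Your proposal is correct and follows the paper's own route exactly: the paper likewise proves the lemma by observing that $t\mapsto C/\log(e+1/t)$ is concave and vanishes at $0^+$ and then invoking Lemma \ref{extension of the exponents} for $p$, with the statement that "the proof for the extension of $q$ is similar." Your explicit second-derivative check of concavity (for both moduli) merely supplies detail the paper asserts without computation, so there is nothing to add.
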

  \begin{proof}
     Since the mapping $\rho:t\rightarrow C/\log(e+1/t)$ is concave for $t\geq 0$ and $\rho(t) \rightarrow0$ for $t\rightarrow 0^+$ and $p$ satisfies uniformly the local continuity condition such that 
     \begin{center}
        $|p(x)-p(y)|\leq \rho(|x-y|)$\quad for all $x,y \in \Omega$,  
     \end{center}
     Due to Lemma \ref{extension of the exponents} it follows that there exists an extension $p_1$ on $\mathbb R^n$ of $p$, which possesses all the desired properties. The proof for the extension of $q$ is similar.
  \end{proof}
  \begin{theorem}[\cite{Juu}]  \label{Justi theorem}
      Let $\Omega$ be an $(\epsilon,\delta)$ -domain with $rad(\Omega)>0.$ Suppose that $\Phi\in\Phi_w(\Omega,\mu)$ satisfies $(A0), (A1), (A2)$ and $(aDec)_{q}$ with $q\geq1.$ Let $\Psi\in\Phi_w(\mathbb R^n,\mu)$ be the extension of $\Phi$ which also satisfies $(A0), (A1), (A2)$ and $(aDec)_{q}$ with $q\geq1.$ Then there exists an operator $\Lambda : W^{1,\Phi(\cdot,\cdot)}(\Omega) \hookrightarrow W^{1,\Psi(\cdot,\cdot)}(\Omega)$ and a constant $B$ such that 
      \begin{equation*}
    ||\Lambda u||_{W^{1,\Psi(\cdot,\cdot)}(\mathbb{R}^n)}\leq B ||u||_{W^{1,\Phi(\cdot,\cdot)}(\Omega)},
\end{equation*}
for every $u\in W^{1,\Phi(\cdot,\cdot)}(\Omega) .$
  \end{theorem}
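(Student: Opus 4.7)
The plan is to follow the classical Jones-type extension scheme for $(\epsilon,\delta)$-domains, adapted to the Musielak-Orlicz-Sobolev setting, with the key analytic tool being the boundedness of the Hardy-Littlewood maximal operator on $L^{\Psi(\cdot,\cdot)}(\mathbb R^n)$. First, I would construct $\Psi$ as the extension of $\Phi$: using Lemma \ref{p-extension} (applied componentwise to the exponents $p$ and $q$ when $\Phi$ has the paper's product form, or in general by a direct McShane-type extension of $\Phi^{-1}$), I obtain $\Psi \in \Phi_w(\mathbb R^n,\mu)$ for which $(A0)$, $(A1)$, $(A2)$, and $(aDec)_q$ persist on all of $\mathbb R^n$ (with possibly enlarged constants). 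This is the easy half of the construction.

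For the operator, I would perform a Whitney decomposition of $\mathbb R^n\setminus\bar\Omega$ into a family $\{Q_k\}$ of dyadic cubes with $\diam(Q_k)\approx \dist(Q_k,\partial\Omega)$, and use the $(\epsilon,\delta)$-property to assign to each $Q_k$ with $\diam(Q_k)\leq\epsilon\delta/16$ a reflected "twin" cube $Q_k^*\subset\Omega$ of comparable size with $\dist(Q_k,Q_k^*)\lesssim\diam(Q_k)$; larger Whitney cubes are discarded via a Lipschitz cutoff at distance $\sim\delta$ from $\partial\Omega$. Choosing a smooth partition of unity $\{\phi_k\}$ subordinate to mild enlargements of the small Whitney cubes, with $|\nabla\phi_k|\lesssim 1/\diam(Q_k)$, set
\begin{equation*}
\Lambda u(x):=u(x)\;\text{on }\Omega,\qquad \Lambda u(x):=\sum_k \phi_k(x)\,\dashint_{Q_k^*}u\,dy\;\text{on the small-cube region}.
\end{equation*}
Standard manipulations (subtracting a common average that vanishes against $\sum_k\nabla\phi_k$, then iterating through a chain of reflected cubes connecting $Q_k^*$ to a fixed base cube along a path that the $(\epsilon,\delta)$-condition provides) yield the pointwise bounds
\begin{equation*}
|\Lambda u(x)|\lesssim M(|u|\mathbf 1_\Omega)(x^*),\qquad |\nabla\Lambda u(x)|\lesssim M(|\nabla u|\mathbf 1_\Omega)(x^*),
\end{equation*}
where $x^*\in\Omega$ is a representative with $|x-x^*|\lesssim\dist(x,\partial\Omega)$ and $M$ is the Hardy-Littlewood maximal operator. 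On $\Omega$ itself the bound is trivial, and near $\partial\Omega$ the traces match so that $\Lambda u\in W^{1,\Psi(\cdot,\cdot)}(\mathbb R^n)$.

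The desired norm inequality then follows by invoking the boundedness of $M$ on $L^{\Psi(\cdot,\cdot)}(\mathbb R^n)$, which is available exactly under $(A0)$, $(A1)$, $(A2)$, and $(aDec)_q$ (see Chapter~4 of \cite{HH19}). This gives
\begin{equation*}
\|\Lambda u\|_{W^{1,\Psi(\cdot,\cdot)}(\mathbb R^n)}\lesssim \bigl\|M(|u|\mathbf 1_\Omega)\bigr\|_{L^{\Psi(\cdot,\cdot)}(\mathbb R^n)}+\bigl\|M(|\nabla u|\mathbf 1_\Omega)\bigr\|_{L^{\Psi(\cdot,\cdot)}(\mathbb R^n)}\lesssim \|u\|_{W^{1,\Phi(\cdot,\cdot)}(\Omega)}.
\end{equation*}

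The main obstacle is the last step: transferring the pointwise comparison into a genuine norm estimate in the $x$-dependent Musielak-Orlicz framework. The issue is that the average $\dashint_{Q_k^*}u$ is evaluated at points $y\in\Omega$ possibly far from $x\in\mathbb R^n\setminus\Omega$, so $\Psi(x,\cdot)$ and $\Phi(y,\cdot)$ must be swapped controllably; this is precisely where $(A1)$ (short-range) and $(A2)$ (long-range) are indispensable, and it is encapsulated cleanly by the maximal-function theorem. A secondary technical point is checking that the cutoff used for large Whitney cubes produces an $L^\infty$ term absorbable by $\|u\|_{L^{\Phi(\cdot,\cdot)}}$ via $(A0)$, which follows from $\diam(\Omega)>0$ together with the uniform bound $\Psi^{-1}(x,1)\asymp 1$.
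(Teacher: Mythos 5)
This theorem is quoted in the paper from \cite{Juu} and no proof is given there, so there is no in-paper argument to compare against; I can only assess your sketch on its own terms. The geometric skeleton you describe --- Whitney decomposition of $\mathbb R^n\setminus\bar\Omega$, reflected cubes supplied by the $(\epsilon,\delta)$-condition, a partition of unity, and chaining of averages --- is indeed the standard Jones construction and is the right framework.

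However, the analytic step you lean on is genuinely wrong under the stated hypotheses. You claim that the Hardy--Littlewood maximal operator is bounded on $L^{\Psi(\cdot,\cdot)}(\mathbb R^n)$ ``exactly under $(A0)$, $(A1)$, $(A2)$, and $(aDec)_q$.'' It is not: the maximal function theorem in the generalized Orlicz setting (Chapter~4 of \cite{HH19}) requires $(aInc)_p$ for some $p>1$, i.e.\ uniform growth strictly above linear. That condition is not among the hypotheses of the theorem, and it genuinely fails in the intended application, since the paper allows $p^-=1$ (already $\Phi(x,t)=t$ satisfies $(A0)$--$(A2)$ and $(aDec)_1$ while $M$ is unbounded on $L^1$). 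So the final inequality $\|M(|u|\mathbf 1_\Omega)\|_{L^{\Psi(\cdot,\cdot)}}\lesssim\|u\|_{L^{\Phi(\cdot,\cdot)}}$ is unavailable and the proof collapses at its last step. The repair is to avoid the maximal operator altogether: the pointwise bounds your construction actually produces are of averaging-operator type, $|\Lambda u(x)|\lesssim\sum_k \mathbf 1_{\tilde Q_k}(x)\dashint_{Q_k^*}|u|\,dy$ over a family of cubes with bounded overlap, and the boundedness of such averaging operators on $L^{\Psi(\cdot,\cdot)}$ holds under $(A0)$--$(A2)$ alone (this is the key-estimate route taken in the cited source). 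The $(aDec)_q$ hypothesis enters for doubling/quasi-norm manipulations, not for any maximal-function bound. With that substitution your outline becomes viable; as written, it proves a weaker theorem that excludes the borderline exponents the paper needs.
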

 \section{Main Results}
\textbf{Proof of Theorem \ref{main embedding}}
By Lemma \ref{p-extension}, we  obtain an extension  $p_1$ on $\mathbb R^n$ of $p$ and an extension $q_1$ on $\mathbb{R}^n$ of $q.$ Consider $\Phi_1(x,t):=t^{p_1(x)}\ {(\log(e+t))}^{q_1(x)}$ and ${\Psi_1}(x,t):=t^{{p_1}^*(x)}(\log(e+t))^{{q_1}(x){p_1}^*(x)/{p_1}(x)}.$ Note that $p_1$ satisfies the conditions $(p1),$ $(p2)$ and log-H\"older decay condition whereas $q_1$ satisfies the conditions $(q1)$ and $(q2),$ and therefore by Lemma \ref{phi satisfies all the condition}, $\Phi_1$ satisfies $(A0), (A1), (A2)$ and $(Dec)_{p_1^+ +q_1^+}$. Since ${p^+ + q^+} \geq  1,$ we get, by Theorem \ref{Justi theorem}, a linear extension operator $\mathcal{E}:W^{1,\Phi(\cdot,\cdot)}(\Omega) \rightarrow W^{1,\Phi_1(\cdot,\cdot)}(\mathbb R^n) $ and a constant $c_1$ such that 
\begin{equation}\label{from extension}
    ||v||_{W^{1,\Phi_1(\cdot,\cdot)}(\mathbb{R}^n)}\leq c_1  ||u||_{W^{1,\Phi(\cdot,\cdot)}(\Omega)}
\end{equation}
and $v|_{\Omega}=u$ for all $u\in W^{1,\Phi_1(\cdot,\cdot)}(\Omega),$ where $\mathcal{E}u=:v.$
On the other hand, using Theorem \ref{embedding theorem in Rn}, we get a constant $c_2$ such that
\begin{equation}\label{from embedding}
       ||v||_{L^{\Psi_1(\cdot,\cdot)}(\mathbb{R}^n)}\leq c_2||v||_{W^{1,\Phi_1(\cdot,\cdot)}(\mathbb{R}^n)}
\end{equation}
for all $v\in W^{1,\Phi_1(\cdot,\cdot)}_0(\mathbb{R}^n).$
Also, since $\Phi_1$ satisfies $(A0), (A1), (A2)$ and $(Dec)_{p_1^+ +q_1^+}$, by Theorem 6.4.4 of \cite{HH19} we have $W^{1,\Phi_1(\cdot,\cdot)}_0(\mathbb R^n) =W^{1,\Phi_1(\cdot,\cdot)}(\mathbb R^n)$. Hence the inequality \eqref{from embedding} holds for all $v\in W^{1,\Phi_1(\cdot,\cdot)}(\mathbb{R}^n).$ Finally, we use the inequalities \eqref{from extension}, \eqref{from embedding} and the facts that $\Phi_1,$ $\Psi_1$ are the extensions of $\Phi$ and $\Psi$ respectively, we obtain, for all $u\in W^{1,\Phi(\cdot,\cdot)}(\Omega),$ 
\begin{eqnarray*}
    ||u||_{L^{\Psi(\cdot,\cdot)}(\Omega)}=||v||_{L^{\Psi(\cdot,\cdot)}(\Omega)}\leq ||v||_{L^{\Psi_1(\cdot,\cdot)}(\mathbb{R}^n)}\leq c_2||v||_{W^{1,\Phi_1(\cdot,\cdot)}(\mathbb{R}^n)} &\leq & C ||u||_{W^{1,\Phi(\cdot,\cdot)}(\Omega)},
\end{eqnarray*}
where $C=c_1c_2.$ \qed

\begin{remark}
    Note that in the above proof, we are using Lemma \ref{p-extension} only to extend the functions $p$ and $q$, which satisfy the same local uniform continuity condition, and not using the boundedness of the linear extension operator. We do not know if we can avoid the lemma and prove the extension of the functions $p$ and $q$ in more general domains.
\end{remark}

\textbf{Proof of Theorem \ref{main theorem}} 
        For a fixed $x$ in $\bar\Omega$ define $A_R:= B_R(x)\cap \Omega$. It is enough to consider the case when $|A_R| \leq 1$, otherwise $|A_R|\geq 1 \geq  R^n$ whenever $R\leq 1$ and there is nothing to prove. Moreover, it is enough to consider $R\leq r_0$ for some $0<r_0\leq 1/4.$ For such an $R$, denote by $\tilde{R}\leq R$ the smallest real number such that 
        \begin{center}
            $|A_{\Tilde{R}}|= \frac{1}{2} |A_R|$
        \end{center}
     To prove Theorem \ref{main theorem}, we need following Lemma:
     \begin{lemma} \label{main lemma }
If we have the same assumptions as in Theorem \ref{main theorem}, then there exist positive constants $C_1$, $C_2$, $C_3$ such that for all $x$ in $\bar{\Omega}$ and every $R$ in $]0,1]$ we have
\[
    R- \tilde{R} \leq C_1 |A_R|^{\frac{1}{n}+\frac{1}{p_{A_R}^+}-\frac{1}{p_{A_R}^-}} (\log (e+\frac{1}{|A_R|}))^{q_{A_R}^+} \label{Case-1}
\] 
when $q(x) \geq 0$ for all $x$,
\[
    R- \tilde{R} \leq C_2 |A_R|^{\frac{1}{n}+\frac{1}{p_{A_R}^+}-\frac{1}{p_{A_R}^-}} (\log (e+\frac{1}{|A_R|}))^{Q_{A_R}} \label{case-2}
\] 
when $q(x) < 0$ for all $x$, and 
\[
    R- \tilde{R} \leq C_3 |A_R|^{\frac{1}{n}+\frac{1}{p_{A_R}^+}-\frac{1}{p_{A_R}^-}} (\log (e+\frac{1}{|A_R|}))^{X_{A_R}} \label{case-3}
\] 
when  $q(x) < 0$ for some $x$ and $q(x)\geq 0$ for some $x$. 
\end{lemma}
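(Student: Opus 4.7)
The plan is to test the hypothesized embedding against a carefully chosen Lipschitz cutoff concentrated on the annulus between $B_{\tilde R}(x)$ and $B_R(x)$, and then to translate the resulting inequality into a statement about $|A_R|$ by invoking the characteristic-function estimates proved in Lemmas \ref{Norm 1 lemma}, \ref{Norm 2 lemma}, and \ref{Norm 3 lemma}.

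First, for fixed $x\in\bar\Omega$ and admissible $R$, I would set
$$u(y) \;:=\; \min\!\left\{1,\;\frac{\bigl(R-|y-x|\bigr)_+}{R-\tilde R}\right\}, \qquad y\in\Omega.$$
This $u$ lies in $W^{1,\Phi(\cdot,\cdot)}(\Omega)$, is supported in $A_R$, equals $1$ on $A_{\tilde R}$, and satisfies the pointwise bound $|\nabla u|\le (R-\tilde R)^{-1}\mathbf{1}_{A_R\setminus A_{\tilde R}}$. Exploiting the monotonicity and positive homogeneity of the Luxemburg norm I immediately obtain
$$\|\mathbf{1}_{A_{\tilde R}}\|_{L^{\Psi(\cdot,\cdot)}(\Omega)}\;\le\;\|u\|_{L^{\Psi(\cdot,\cdot)}(\Omega)}, \qquad \|u\|_{W^{1,\Phi(\cdot,\cdot)}(\Omega)}\;\le\;\Bigl(1+\tfrac{1}{R-\tilde R}\Bigr)\|\mathbf{1}_{A_R}\|_{L^{\Phi(\cdot,\cdot)}(\Omega)}.$$

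Next, feeding $u$ into hypothesis~$(2)$ of Theorem~\ref{main theorem} and using $R-\tilde R\le R\le 1$ to absorb the constant term, I arrive at
$$R-\tilde R \;\le\; C\,\frac{\|\mathbf{1}_{A_R}\|_{L^{\Phi(\cdot,\cdot)}(\Omega)}}{\|\mathbf{1}_{A_{\tilde R}}\|_{L^{\Psi(\cdot,\cdot)}(\Omega)}}.$$
Now I split into the three sign regimes for $q$ (which match the three conclusions of the lemma). In each case I bound the numerator from above and the denominator from below by applying the appropriate norm lemma, using $\Psi(x,t)=t^{p^*(x)}(\log(e+t))^{q(x)p^*(x)/p(x)}$ in the denominator (whose second exponent has the same sign as $q$, so the same lemma applies there). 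The identity $1/(p^*)^-_{A}=1/p^-_{A}-1/n$ together with $|A_{\tilde R}|=|A_R|/2$ turns the power factor into exactly $|A_R|^{1/n+1/p^+_{A_R}-1/p^-_{A_R}}$, while the log factors consolidate into the stated exponents. Log-H\"older continuity of $p$, combined with $|A_R|\le cR^n$, controls the discrepancy between $(p^*)^-_{A_{\tilde R}}$ and $(p^*)^-_{A_R}$, so the associated factor $|A_R|^{o(1)}$ is bounded and can be absorbed into the constant.

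The main technical obstacle is Case~3, in which $q$ changes sign. There the two-term splits in the upper and lower bounds of Lemma~\ref{Norm 3 lemma} mix contributions weighted by $q^+_{A_R}$ and $q^-_{A_R}/p^-_{A_R}$; the exponent $X_{A_R}$ in the claim must therefore be read as a sign-dependent combination of these quantities. Once this bookkeeping is carried out, the resulting inequality has exactly the form demanded by the lemma, while Cases~1 and~2 fall out more directly from Lemmas~\ref{Norm 1 lemma} and~\ref{Norm 2 lemma}, yielding the exponents $q^+_{A_R}$ and $Q_{A_R}$ respectively.
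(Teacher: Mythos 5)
Your proposal is correct and follows essentially the same route as the paper: test the embedding against a Lipschitz cutoff equal to $1$ on $A_{\tilde R}$ and supported in $A_R$ with gradient bounded by $(R-\tilde R)^{-1}$, reduce to $R-\tilde R\lesssim \|\mathbf{1}_{A_R}\|_{L^{\Phi(\cdot,\cdot)}}/\|\mathbf{1}_{A_{\tilde R}}\|_{L^{\Psi(\cdot,\cdot)}}$, and then invoke Lemmas \ref{Norm 1 lemma}--\ref{Norm 3 lemma} together with $|A_{\tilde R}|=\tfrac12|A_R|$ and $1/(p^*)^-=1/p^--1/n$ in the three sign regimes. The only cosmetic difference is that you invoke log-H\"older continuity to compare $(p^*)^-_{A_{\tilde R}}$ with $(p^*)^-_{A_R}$, whereas the inclusion $A_{\tilde R}\subset A_R$ already gives the needed inequality directly since $|A_{\tilde R}|\le 1$.
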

\begin{proof}
   Since  $W^{1,\Phi(\cdot,\cdot)}(\Omega) \hookrightarrow L^{\Psi(\cdot,\cdot)}(\Omega),$ there exists a constant $c_1 >0$ such that whenever $u\in W^{1,\Phi(\cdot,\cdot)} (\Omega)$ one has the inequality
 \begin{equation} \label{inequality 1}
       ||u||_{L^{\Psi(\cdot,\cdot)}(\Omega)}\leq c_1|| u||_{W^{1,\Phi(\cdot,\cdot)}(\Omega)}.
 \end{equation}
For a fixed $x\in\bar{\Omega}$ let $u(y):=\phi(y-x),$ where $y\in\Omega$ and $\phi$ is a cut-off function so that
\begin{enumerate}
\item $\phi:\mathbb{R}^n \rightarrow [0,1]$, 
\item $\hbox{spt}\ \phi \subset B_R(0)$,
\item  $\phi|_{B_{\tilde{R}}(0)}=1$, and 
\item $|\nabla \phi | \leq \tilde{c} / (R-\tilde{R})$ for some constant $\tilde{c}$.
\end{enumerate}

\bigskip
Note that we have the inequalities
\[
\|1_{B_{\tilde{R}}}\|_{L^{\Psi(\cdot,\cdot)(\Omega)}}\leq \|u\|_{L^{\Psi(\cdot,\cdot)(\Omega)}}, \quad\quad \|u\|_{L^{\Phi(\cdot,\cdot)(\Omega)}}\leq \|1_{B_{R}}\|_{L^{\Phi(\cdot,\cdot)(\Omega)}}
\]
and 
\[
    \|\nabla u\|_{L^{\Phi(\cdot,\cdot)(\Omega)}} \leq \frac{\tilde{c}}{R-\tilde{R}}\|1_{B_R \thicksim B_{\tilde{R}}}\|_{L^{\Phi(\cdot,\cdot)(\Omega)}}
 \leq  \frac{\tilde{c}}{R-\tilde{R}}\ \| 1_{B_R}\|_{L^{\Phi(\cdot,\cdot)(\Omega)}} .
\]
Use these inequalities in inequality \eqref{inequality 1} to obtain
\begin{eqnarray*} 
\|1_{B_{\tilde{R}}}\|_{L^{\Psi(\cdot,\cdot)(\Omega)}} &\leq & c_1(\|1_{B_{R}}\|_{L^{\Phi(\cdot,\cdot)(\Omega)}}+\frac{\tilde{c}}{R-\tilde{R}}\ \| 1_{B_R}\|_{L^{\Phi(\cdot,\cdot)(\Omega)}})\\
&\leq & \frac{2c_1\max\{1,\tilde{c}\}}{R-\tilde{R}}\|1_{B_{R}}\|_{L^{\Phi(\cdot,\cdot)(\Omega)}}
\end{eqnarray*}
and hence
\begin{eqnarray*}
 R- \tilde{R} \leq {c_2} \frac{\|1_{B_R}\|_{L^{\Phi(\cdot,\cdot)(\Omega)}}}{ \| 1_{B_{\tilde{R}}}\|_{L^{\Psi(\cdot,\cdot)(\Omega)}}},
\end{eqnarray*}
where $c_2:=2c_1\max\{1,\tilde{c}\}.$\\

Case-1 ($q(x) \geq 0 $ for all $x\in \Omega$): Using the norm estimates in Lemma \ref{Norm 1 lemma}, we get
 \begin{eqnarray*}
 R- \tilde{R} &\leq& c_2\ \frac{|A_R|^\frac{1}{p_{A_R}^+} (\log (e+\frac{1}{|A_R|}))^{q_{A_R}^+}} {|A_{\tilde{R}}|^\frac{1}{p*_{A_R^-}}} \\
              &=& c_2\ \frac{|A_R|^\frac{1}{p_{A_R}^+} (\log (e+\frac{1}{|A_R|}))^{q_{A_R}^+}} {|A_{\tilde R}|^{\frac{1}{p_{A_R}^-} -\frac{1}{n}}}\\
              &=&  c_2\ 2^{\frac{1}{p_{A_R}^-} -\frac{1}{n}}|A_R|^{\frac{1}{p_{A_R}^+} -\frac{1}{p_{A_R}^-} +\frac{1}{n}} (\log (e+\frac{1}{|A_R|}))^{q_{A_R}^+}\\
              &\leq & c_2\ 2^{\frac{1}{p^-} -\frac{1}{n}}\ |A_R|^{\frac{1}{p_{A_R}^+} -\frac{1}{p_{A_R}^-} +\frac{1}{n}} (\log (e+\frac{1}{|A_R|}))^{q_{A_R}^+}\\
\end{eqnarray*}
as claimed.\\

Case-2 ($q(x) \leq 0$ for all $x\in \Omega$): Using the norm estimates in Lemma \ref{Norm 2 lemma}, we get 
\begin{eqnarray*}
 R- \tilde{R} &\leq& c_2\ \frac{|A_R|^\frac{1}{p_{A_R}^+}} {|A_{\tilde{R}}|^\frac{1}{p*_{A_R}^-} (\log (e+\frac{1}{|A_{\tilde{R}}|}))^\frac{q_{A_R}^- p*_{A_R}^+}{p*_{A_R}^- p_{A_R}^+ } } \\
              &=& c_2\ \frac{|A_R|^\frac{1}{p_{A_R}^+}} {|A_{\tilde R}|^{\frac{1}{p_{A_R}^-} -\frac{1}{n}}(\log (e+\frac{1}{|A_{\tilde{R}}|}))^{-Q_{A_R}}}\\
              &\equiv &  c_2\ 2^{\frac{1}{p_{A_R}^-} -\frac{1}{n}}|A_R|^{\frac{1}{p_{A_R}^+} -\frac{1}{p_{A_R}^-} +\frac{1}{n}} (\log (e+\frac{1}{|A_R|}))^{Q_{A_R}}\\
              &\leq & c_2\ 2^{\frac{1}{p^-} -\frac{1}{n}}\ |A_R|^{\frac{1}{p_{A_R}^+} -\frac{1}{p_{A_R}^-} +\frac{1}{n}} (\log (e+\frac{1}{|A_R|}))^{Q_{A_R}},
\end{eqnarray*}
where $Q_{A_R} =\frac{-q_{A_R}^- (n-p_{A_R}^-)}{p_{A_R}^- (n-p_{A_R}^+) } \geq 0. $\\
Case-3 ($q(x) < 0$ for some $x\in \Omega$ and $q(x)\geq 0$ for some $x\in \Omega$): Using the norm estimates in Lemma \ref{Norm 3 lemma}, we get 
\begin{eqnarray*}
 R- \tilde{R} &\leq& c_2'\ \frac{|A_R|^\frac{1}{p_{A_R}^+}(\log (e+\frac{1}{|A_R|}))^{q_{A_R}^+}} {|A_{\tilde{R}}|^\frac{1}{p*_{A_R}^-} (\log (e+\frac{1}{|A_{\tilde{R}}|}))^\frac{q_{A_R}^- p*_{A_R}^+}{p*_{A_R}^- p_{A_R}^+ } } \\
              &=& c_2'\ \frac{|A_R|^\frac{1}{p_{A_R}^+}(\log (e+\frac{1}{|A_R|}))^{q_{A_R}^+}} {|A_{\tilde R}|^{\frac{1}{p_{A_R}^-} -\frac{1}{n}}(\log (e+\frac{1}{|A_{\tilde{R}}|}))^{-Q_{A_R}}}\\
              &\equiv &  c_2'\ 2^{\frac{1}{p_{A_R}^-} -\frac{1}{n}}|A_R|^{\frac{1}{p_{A_R}^+} -\frac{1}{p_{A_R}^-} +\frac{1}{n}} (\log (e+\frac{1}{|A_R|}))^{T_{A_R}}\\
              &\leq & c_2'\ 2^{\frac{1}{p^-} -\frac{1}{n}}\ |A_R|^{\frac{1}{p_{A_R}^+} -\frac{1}{p_{A_R}^-} +\frac{1}{n}} (\log (e+\frac{1}{|A_R|}))^{T_{A_R}}\\
              &\leq & c_2'\ 2^{\frac{1}{p^-} -\frac{1}{n}}\ |A_R|^{\frac{1}{p_{A_R}^+} -\frac{1}{p_{A_R}^-} +\frac{1}{n}} (\log (e+\frac{1}{|A_R|}))^{S_{A_R}},
\end{eqnarray*}
where $c_2'= \frac{c_2b_2}{b_1}$, $T_{A_R} = q_{A_R}^+ + Q_{A_R} = q_{A_R}^+ - \frac{q_{A_R}^-(n-p_{A_R}^-)}{p_{A_R}^- (n-p_{A_R}^+) } $ and $S_{A_R} = \max\{q_{A_R}^+,Q_{A_R},T_{A_R}\}$\\
\end{proof}
To continue the proof of Theorem \ref{main theorem}, construct the sequence $\{  R_i \}$ by setting $R_0:=R$, and then define  $R_{i+1}:=\tilde{R_i}$ inductively for $i \geq 0$.
It follows that
\[
     |A_{R_{i}}|=\frac{1}{2^i}\ | A_R|, 
\]
with $\lim_{i \rightarrow \infty} R_i=0$.\\

Case-1 ($q(x) \geq 0 $ for all $x\in \Omega$): Using Lemma \ref{main lemma } one obtains
\begin{eqnarray*}     
  R_i -R_{i+1} &\leq& C_1 |A_{R_i}|^{\frac{1}{n}+\frac{1}{p_{A_{R_i}}^+}-\frac{1}{p_{A_{R_i}}^-}}(\log (e+\frac{1}{|A_{R_i}|}))^{q_{A_{R_i}}^+}\\ 
	      &\leq&  C_1 |A_{R_i}|^{\frac{1}{n}+\frac{1}{p_{A_R}^+}-\frac{1}{p_{A_R}^-}}(\log (e+\frac{1}{|A_{R_i}|}))^{q_{A_R}^+}\\
             &=& C_1 \frac{|A_R|^{\eta_R}}{2^{i \eta_R}}(\log (e+\frac{2^i}{|A_{R}|}))^{q_{A_R}^+},
\end{eqnarray*}
where $\eta_R :=\frac{1}{n}+\frac{1}{p_{A_R}^+}-\frac{1}{p_{A_R}^-}.$  
Since we have $(\log (e+\frac{2^i}{|A_{R}|}))^{q_{A_R}^+}\leq i^{q_{A_R}^+}(\log (e+\frac{2}{|A_{R}|}))^{q_{A_R}^+}\leq 2^{q^+}i^{q_{A_R}^+}(\log (e+\frac{1}{|A_{R}|}))^{q_{A_R}^+}$ for $i\geq 1,$ we obtain 
\begin{eqnarray*}     
  R_i -R_{i+1} 
             &\leq & c_3\frac{i^{q_{A_R}^+}|A_R|^{\eta_R}}{2^{i \eta_R}}(\log (e+\frac{1}{|A_{R}|}))^{q_{A_R}^+},
\end{eqnarray*}
where $c_3={C_1}2^{q^+}.$

Note that $\eta_R \geq \eta :=\frac{1}{n}+\frac{1}{p^+}-\frac{1}{p^-} > 0$ and by integral test $\sum_{i=1}^{\infty} {i^{q_{A_R}^+}}{2^{-i\eta_R}}\leq \frac{(q_{A_R}^+)!}{(\eta_R \ln2)^{(q_{A_R}^+ +1)}}$ and hence
\begin{eqnarray*}     
 R = \sum_{i=0}^{\infty }(R_i -R_{i+1}) 
 &\leq& c_3 |A_R|^{\eta_R}(\log (e+\frac{1}{|A_{R}|}))^{q_{A_R}^+} \Big(1+\sum_{i=1}^{\infty} {i^{q_{A_R}^+}}{2^{-i\eta_R}}\Big)\\
  &\leq& c_3 |A_R|^{\eta_R}(\log (e+\frac{1}{|A_{R}|}))^{q_{A_R}^+}\Big(\frac{(q_{A_R}^+)!}{(\eta_R \ln2)^{(q_{A_R}^+ +1)}}+1\Big)\\
   &\leq& c_3 |A_R|^{\eta_R}(\log (e+\frac{1}{|A_{R}|}))^{q_{A_R}^+}\Big(\frac{(q_{A_R}^+)!}{(\eta \ln2)^{(q_{A_R}^+ +1)}}+1\Big)\\
   &\leq& c_3 |A_R|^{\eta_R}(\log (e+\frac{1}{|A_{R}|}))^{q_{A_R}^+}\Big(\frac{(q^+)!}{({\min(1,\eta)} \ln2)^{(q^+ +1)}}+1\Big)
 \end{eqnarray*}
Moreover, since $ c_4:=1/ \max\Big\{1,\Big(\frac{c_3(q^+)!}{({\min(1,\eta)} \ln2)^{(q^+ +1)}}+c_3\Big)\Big\} \leq 1$ one has 
\begin{eqnarray}    \label{AR}
 |A_R| (\log (e+\frac{1}{|A_{R}|}))^\frac{q_{A_R}^+}{\eta_R}  \geq   c_4^{1 /\eta_R}  R^{1/\eta_R}
 \geq  c_4^{ 1 /\eta} R^{1 /\eta_R}
 = c_4^{1 / \eta} R^n R^{\beta_R / \eta_R} ,
\end{eqnarray}
where  $\beta_R := 1-n \eta_R$.

Now, we would like to find a constant $\tilde{\eta}>0$, independent of $x$ and $R$, such that $\frac{1}{s}+\frac{1}{p_{A_R}^+}-\frac{1}{p_{A_R}^-} =: \eta_R\geq \tilde{\eta}>0$ for all $R\leq r_0.$ Towards this end, the log-H\"older continuity of $p$ gives, for any $z$ and $y$ in $A_R,$
\[
\vert \frac{1}{p(z)}-\frac{1}{p(y)}\vert\leq \frac{C_{\text{log}}}{\log(e+1/|z-y|)},
\]
and taking the supremum over all pairs of  points in $A_R$ one gets
\begin{equation}   \label{estimate1/p}
\frac{1}{p_{A_R}^-}-\frac{1}{p_{A_R}^+}\leq \frac{C_{\text{log}}}{\log(1/(2R))}.
\end{equation}
Suppose now that for some $R\leq 1/4$ we have that $\eta_R\leq 0$. Then (\ref{estimate1/p}) gives
$ \frac{1}{s}\leq \frac{C_{\text{log}}}{\log\left(\frac{1}{2R}\right)}$,
which further implies
$R\geq \frac{1}{2}e^{-sC_{\text{log}}}$.

Hence we have the following conclusion:
\begin{itemize} 
\item If $\frac{1}{2}e^{-sC_{\text{log}}}>\frac{1}{4}$, then there is no $R\leq \frac{1}{4}$ for which $\eta_R\leq 0$.
\item If $\frac{1}{2}e^{-sC_{\text{log}}}\leq \frac{1}{4}$, then $\eta_R\leq 0$ implies $R\geq \frac{1}{2}e^{-sC_{\text{log}}}.$ 
\end{itemize}

Therefore if we choose $r_0=\frac{1}{2}\min\{\frac{1}{4},\frac{1}{2}e^{-sC_{\text{log}}}\}$, then $\eta_{r_0}>0$, and also
\begin{equation}\label{estimate of r_0} 
 \frac{1}{s}>\frac{C_{\text{log}}}{\log(1/(2r_0))}.
\end{equation} 
  
But $\eta_{r_0}$ may depend on the point $x$ fixed at the beginning of the proof. To obtain the required $\tilde{\eta}$, we apply again log-H\"older continuity of $1/p$ on $A_{r_0}$, to obtain
 \begin{equation}\label{estimate in A_r_0}
 \frac{1}{p_{A_{r_0}}^-}-\frac{1}{p_{A_{r_0}}^+}\leq \frac{C_{\text{log}}}{\log(1/(2r_0))},
 \end{equation}
 and \eqref{estimate of r_0} together with \eqref{estimate in A_r_0} give
 \[
 \eta_{r_0}=\frac{1}{s}+\frac{1}{p_{A_{r_0}}^+}-\frac{1}{p_{A_{r_0}}^-}\geq \frac{1}{s}-\frac{C_{\text{log}}}{\log(1/(2r_0))}>0.
 \]
 Choosing $\tilde{\eta}:=\frac{1}{s}-\frac{C_{\text{log}}}{\log(1/(2r_0))},$ we get that $\eta_R\geq \eta_{r_0}\geq \tilde{\eta}>0$ for all $R\leq r_0$. This is our desired 
$\tilde{\eta}.$\\


Therefore, from equation \eqref{AR} one sees that if a positive lower bound for $R^{\beta_R / \eta_R}$ 
is provided, the proof of Theorem \ref{main theorem}
is finished. To achieve such a lower bound, we see that from the log-H\"{o}lder continuity of $p,$ 
\[
\vert p(z)-p(y)\vert\leq \frac{C_{\text{log}}}{\log(e+1/|z-y|)} ;
\]
taking the supremum over pairs of points in $A_R$ one gets
\[
p_{A_R}^+-p_{A_R}^-\leq \frac{C_{\text{log}}}{\log(1/(2R))} ,
\]
or
\[
\log\left(1 / (2R)^{p_{A_R}^+-p_{A_R}^-}\right)\leq C_{\text{log}},
\]
therefore
\begin{equation} \label{inequality 4}    
  R^{p_{A_R}^+-p_{A_R}^-}\geq \frac{e^{-C_{\text{log}}}}{2^{p_{A_R}^+- p_{A_R}^-  }} \geq \frac{e^{-C_{\text{log}}}}{2^{(p^+-p^-)}}.
\end{equation}

But
\[
R^{\frac{\beta_R}{\eta_R}} \geq R^{\frac{\beta_R}{\eta}}
= R^{\frac{n(p_{A_R}^+ - p_{A_R}^-)}{\eta p_{A_R}^+  p_{A_R}^-  }} \geq   (R^{p_{A_R}^+ - p_{A_R}^-})^{n / \eta (p^-)^2} ,
\]
hence using \eqref{inequality 4} the required bound
\[  
R^{\frac{\beta_R}{\eta_R}} \geq  \left(\frac{e^{-C_{\text{log}}}}{2^{(p^+-p^-)}} \right)^{n / \eta (p^-)^2} =: c_5 >0.
\]
 Taking $f(t)=t (\log (e+\frac{1}{t}))^\frac{q_{A_R}^+}{\eta_R}$, we see that \eqref{AR} becomes $f(|A_R|)\geq cR^n,$ where $c := c_4^{1 / \eta} c_5$ and hence $|A_R|\geq  f^{-1}(cR^n)$ which further implies that $c
 R^n (\log (e+\frac{1}{R}))^\frac{-q^+}{\eta} \leq c
 R^n (\log (e+\frac{1}{R}))^\frac{-q_{A_R}^+}{\eta_R} \leq | B_R(x) \cap \Omega | $. So, $\Omega$ satisfies the log-measure density condition.\\
 
Case-2 ($q(x) \leq 0$ for all $x\in \Omega$): Using Lemma \ref{main lemma } one obtains 
\begin{eqnarray*}     
  R_i -R_{i+1} &\leq& C_2 |A_{R_i}|^{\frac{1}{n}+\frac{1}{p_{A_{R_i}}^+}-\frac{1}{p_{A_{R_i}}^-}}(\log (e+\frac{1}{|A_{R_i}|}))^{Q_{A_R}}\\ 
	      &\leq&  C_2 |A_{R_i}|^{\frac{1}{n}+\frac{1}{p_{A_R}^+}-\frac{1}{p_{A_R}^-}}(\log (e+\frac{1}{|A_{R}|}))^{Q_{A_R}}\\
             &=& C_2 \frac{|A_R|^{\eta_R}}{2^{i \eta_R}}(\log (e+\frac{2^i}{|A_{R}|}))^{Q_{A_R}} \\
             &\leq & c_6\frac{2^{Q}i^{Q_{A_R}}|A_R|^{\eta_R}}{2^{i \eta_R}}(\log (e+\frac{1}{|A_{R}|}))^{Q_{A_R}},
\end{eqnarray*}
where $\eta_R :=\frac{1}{n}+\frac{1}{p_{A_R}^+}-\frac{1}{p_{A_R}^-}$ and $c_6=C_2(2)^{Q} .$ Note that $\eta_R \geq \eta :=\frac{1}{n}+\frac{1}{p^+}-\frac{1}{p^-} > 0$ and hence
\begin{eqnarray*}     
 R = \sum_{i=0}^{\infty }(R_i -R_{i+1}) 
 &\leq& c_6 |A_R|^{\eta_R}(\log (e+\frac{1}{|A_{R}|}))^{Q_{A_R}} \Big(1+\sum_{i=1}^{\infty} {i^{Q_{A_R}}}{2^{-i\eta_R}}\Big)\\
  &\leq& c_6 |A_R|^{\eta_R}(\log (e+\frac{1}{|A_{R}|}))^{Q_{A_R}}\Big(1+\frac{(Q_{A_R}!)}{(\eta_R \ln2)^{({Q_{A_R}} +1)}}\Big)\\
   &\leq& c_6 |A_R|^{\eta_R}(\log (e+\frac{1}{|A_{R}|}))^{Q_{A_R}} \Big(1+\frac{(Q_{A_R} )!}{(\eta \ln2)^{(Q_{A_R} +1)}}\Big)\\
   &\leq& c_6 |A_R|^{\eta_R}(\log (e+\frac{1}{|A_{R}|}))^{Q_{A_R}} \Big(1+\frac{(Q )!}{(\min(1,\eta) \ln2)^{(Q +1)}}\Big)\\
 \end{eqnarray*}
 where $Q =  \frac{-q^-(n-p^-)}{p^-(n-p^+) }\geq Q_{A_R} \geq 0.$\\
 Moreover, since $ c_7:=1/ \max\Big\{\Big(1,c_6+\frac{c_6(Q )!}{(\min(1,\eta) \ln2)^{(Q +1)}}\Big)\Big\} \leq 1$ one has
 \begin{eqnarray}    \label{BR}
 |A_R| (\log (e+\frac{1}{|A_{R}|}))^\frac{Q_{A_R}}{\eta_R}  \geq   c_4^{1 /\eta_R}  R^{1/\eta_R}
 \geq  c_7^{ 1 /\eta} R^{1 /\eta_R}
 = c_7^{1 / \eta} R^n R^{\beta_R / \eta_R} ,
\end{eqnarray}
where  $\beta_R := 1-n \eta_R$.
Now we can proceed similarly as in case-1 to obtain $c
 R^n (\log (e+\frac{1}{R}))^\frac{-Q}{\eta} \leq c
 R^n (\log (e+\frac{1}{R}))^\frac{-Q_{A_R}}{\eta_R} \leq | B_R(x) \cap \Omega | $. So, $\Omega$ satisfies the log-measure density condition.\\
 
 Case-3 ($q(x) < 0$ for some $x\in \Omega$ and $q(x)\geq 0$ for some $x\in \Omega$): Using Lemma \ref{main lemma } one obtains 
\begin{eqnarray*}     
  R_i -R_{i+1} &\leq& C_3 |A_{R_i}|^{\frac{1}{n}+\frac{1}{p_{A_{R_i}}^+}-\frac{1}{p_{A_{R_i}}^-}}(\log (e+\frac{1}{|A_{R_i}|}))^{S_{A_R}}\\ 
	      &\leq&  C_3 |A_{R_i}|^{\frac{1}{n}+\frac{1}{p_{A_R}^+}-\frac{1}{p_{A_R}^-}}(\log (e+\frac{1}{|A_{R}|}))^{S_{A_R}}\\
             &=& C_3 \frac{|A_R|^{\eta_R}}{2^{i \eta_R}}(\log (e+\frac{2^i}{|A_{R}|}))^{S_{A_R}} \\
             &\leq & c_8\frac{2^{S}i^{S_{A_R}}|A_R|^{\eta_R}}{2^{i \eta_R}}(\log (e+\frac{1}{|A_{R}|}))^{S_{A_R}},
\end{eqnarray*}
where $\eta_R :=\frac{1}{n}+\frac{1}{p_{A_R}^+}-\frac{1}{p_{A_R}^-}$ and $c_8=C_3(2)^{S} .$ Note that $\eta_R \geq \eta :=\frac{1}{n}+\frac{1}{p^+}-\frac{1}{p^-} > 0$ and hence
\begin{eqnarray*}     
 R = \sum_{i=0}^{\infty }(R_i -R_{i+1}) 
 &\leq& c_8 |A_R|^{\eta_R}(\log (e+\frac{1}{|A_{R}|}))^{S_{A_R}} \Big(1+\sum_{i=1}^{\infty} {i^{S_{A_R}}}{2^{-i\eta_R}}\Big)\\
  &\leq& c_8 |A_R|^{\eta_R}(\log (e+\frac{1}{|A_{R}|}))^{S_{A_R}}\Big(1+\frac{(S_{A_R}!)}{(\eta_R \ln2)^{({S_{A_R}} +1)}}\Big)\\
   &\leq& c_8 |A_R|^{\eta_R}(\log (e+\frac{1}{|A_{R}|}))^{S_{A_R}} \Big(1+\frac{(S_{A_R} )!}{(\eta \ln2)^{(S_{A_R} +1)}}\Big)\\
   &\leq& c_8 |A_R|^{\eta_R}(\log (e+\frac{1}{|A_{R}|}))^{S_{A_R}} \Big(1+\frac{(S )!}{(\min(1,\eta) \ln2)^{(S +1)}}\Big)\\
 \end{eqnarray*}
 where $S= \max\{q^+,Q,T\} \geq S_{A_R}\geq 0$ and $T = q^+ -\frac{q^-(n-p^-)}{p^-(n-p^+) }\geq T_{A_R}.$\\
 Moreover, since $ c_9:=1/ \max\Big \{\Big(1,c_8+\frac{c_8(S )!}{(\min(1,\eta) \ln2)^{(S +1)}}\Big)\Big\} \leq 1$ one has
 \begin{eqnarray}    \label{BR2}
 |A_R| (\log (e+\frac{1}{|A_{R}|}))^\frac{S_{A_R}}{\eta_R}  \geq   c_9^{1 /\eta_R}  R^{1/\eta_R}
 \geq  c_9^{ 1 /\eta} R^{1 /\eta_R}
 = c_9^{1 / \eta} R^n R^{\beta_R / \eta_R} ,
\end{eqnarray}
where  $\beta_R := 1-n \eta_R$.
Now we can proceed similarly as in case-1 to obtain $c
 R^n (\log (e+\frac{1}{R}))^\frac{-S}{\eta} \leq c
 R^n (\log (e+\frac{1}{R}))^\frac{-S_{A_R}}{\eta_R} \leq | B_R(x) \cap \Omega | $. So, $\Omega$ satisfies the log-measure density condition.
 \qed

\bigskip

\def\bibname{References}
\bibliography{musielak}
\bibliographystyle{alpha}

\bigskip

\noindent{\small Ankur Pandey}\\
\small{Department of Mathematics,}\\
\small{Birla Institute of Technology and Science-Pilani, Hyderabad Campus,}\\
\small{Hyderabad-500078, India} \\
{\tt p20210424@hyderabad.bits-pilani.ac.in; pandeyankur600@gmail.com}\\

\noindent{\small Nijjwal Karak}\\
\small{Department of Mathematics,}\\
\small{Birla Institute of Technology and Science-Pilani, Hyderabad Campus,}\\
\small{Hyderabad-500078, India} \\
{\tt nijjwal@gmail.com ; nijjwal@hyderabad.bits-pilani.ac.in}\\
\\

\end{document}